\newtheorem{theorem}{Theorem}[section]
\newtheorem{lemma}[theorem]{Lemma}
\newtheorem{definition}[theorem]{Definition}
\newcommand{\norm}[1]{\left\Vert#1\right\Vert}
\newcommand{\abs}[1]{\left\vert#1\right\vert}
\def \RN   {\mathbb{R}^n}
\title[Variation operators in high order Schr\"odinger setting]{Boundedness of variation operators associated with the heat semigroup generated by high order Schr\"odinger type operators}
\subjclass[2010]{Primary 42B35, 42B20; Secondary 42B25.}
\keywords{Variation operators, high order Schr\"{o}dinger type operators, heat semigroup.}
\thanks{$^*${Corresponding author.}
The first author was supported by the National Natural Science Foundation of China (No. 11701453). The second author was supported by the Natural Science Foundation of Zhejiang Province(Grant No. LY18A010006), the first Class Discipline of Zhejiang - A (Zhejiang Gongshang University- Statistics) and the State Scholarship Fund(No. 201808330097).
}
\address{Suying Liu \endgraf Department of Applied Mathematics, Northwest Polytechnical University,
Xi'an 710072, People's Republic of China \endgraf \emph{E-mail address: suyingliu@nwpu.edu.cn}}
\address{Chao Zhang \endgraf Department of Mathematics,
Zhejiang Gongshang University, Hangzhou, 310018, People's Republic of China \endgraf \emph{E-mail address: zaoyangzhangchao@163.com}}
\author[S. Liu and C. Zhang]{Suying Liu and Chao Zhang*}
\begin{document}

\vskip 3cm
\begin{abstract}
In this paper, we derive the $L^p$-boundedness of the variation operators associated with the heat semigroup which is generated by the high order Schr\"odinger type operator $(-\Delta)^2+V^2$. Further more, we prove the boundedness of the variation operators on Morrey spaces. In the proof of the main results, we always make use of the variation inequalities associated with the heat semigroup generated by the biharmonic operator $(-\Delta)^2.$
\end{abstract}

\maketitle

 \vskip3mm
\section{\bf Introduction}

Variation inequalities have been the subject of many recent research papers in probability, ergodic theory and harmonic analysis.
The first variation inequality  was proved  by L\'epingle \cite{Lepingle} in martingale theory. Bourgain \cite{Bourgain} proved the variation inequality for the ergodic averages of a dynamic system. Bourgain's work has inaugurated a new research direction in ergodic theory and harmonic analysis. And then, Campbell, Jones, Reinhold and Wierdl \cite{CJRW2000} proved the variation inequalities for the Hilbert transform. Since then many other publications came to enrich the literature on this subject in Harmonic Analysis (see \cite{Bui, CJRW, GillTorrea, JR, JSW, JWang, MTX} and so on).

 Let $\{T_t\}_{t>0}$ be a family of operator such that the limit $\displaystyle \lim_{t\rightarrow 0}=Tf(x)$ exists in some sense.
A classical method of measuring the speed of convergence of the family $\{T_t\}_{t>0}$ is to consider the ``square function" of the type $\displaystyle \Big(\sum_{i=1}^\infty|T_{t_i}f-T_{t_{i+1}}f|^2\Big)^{1/2}$, where $t_{i}\searrow 0$, or the more generally variation operator
$\mathcal{V}_\rho(T_t)$, where $\rho >2$, is given by
$$\mathcal{V}_\rho(T_t)(f)(x):=\sup_{t_{i}\searrow 0}\Big(\sum^\infty_{i=1}|T_{t_i}f(x)-T_{t_{i+1}}f(x)|^\rho\Big)^{1/\rho},$$
where the supremum is taken over all the positive decreasing sequences $\{t_j\}_{j\in \mathbb N}$ which converge to $0$.
We denote $E_\rho$  the space including all the functions $w: (0, \infty)\rightarrow \mathbb{R}$, such that
\begin{equation*}
\|w\|_{E_\rho}:=\sup_{t_{i}\searrow 0}\Big(\sum^\infty_{i=0}|w(t_i)-w(t_{i+1})|^\rho\Big)^{1/\rho}<\infty,
\end{equation*}
$\|w\|_{E_\rho}$ is a seminorm on $E_\rho$. It can be written as
\begin{equation*}
\mathcal{V}_\rho(T_t)(f)=\|T_tf\|_{E_\rho}.
\end{equation*}

In this paper, we mainly focus on the variation operators associated with the high order Schr\"{o}dinger type operators $\mathcal{L}=(-\Delta)^2+V^2$ in $\RN$ with $n\geq 5$, where the nonnegative potential $V$
belongs to the reverse H\"{o}lder class $RH_q$ for some $q>n/2$, that is, there exists $C>0$, such that
\begin{equation*}
\left(\frac{1}{|B|}\int_B V(x)^qdx\right)^{\frac{1}{q}}\leq \frac{C}{|B|}\int_B V(x)dx,
\end{equation*}
for every ball $B$ in $\RN$.
Some results related with $(-\Delta)^2+V^2$ were firstly considered by Zhong in \cite{Zhong}.  In \cite{Sugano}, Sugano proved the estimation of the fundamental solution, and the  $L^p$-boundedness of some operators related with this  operator. For more results related with this operator, see \cite{CLY, LiuDo, LiuZhang}.

The heat semigroup $e^{-t\mathcal{L}}$ generated by the operator $-\mathcal{L}$ can be written as
\begin{equation*}
e^{-t\mathcal{L}}f(x)=\int_{\RN}\mathcal{B}_t(x,y)f(y)dy, ~~\mbox{for}~~ f\in L^2(\RN), t>0.
\end{equation*}
The kernel of the heat semigroup $e^{-t\mathcal{L}}$
 satisfies the estimate
\begin{equation}\label{B}
|\mathcal{B}_t(x,y)|\leq Ct^{-\frac{n}{4}}e^{-A\frac{|x-y|^{4/3}}{t^{1/3}}},
\end{equation}
for more details see \cite{BD}.

We recall the definition of the function $\gamma(x)$, which plays important roles in the theory of operators associated with $\mathcal{L}$:
\begin{equation}\label{gamma}
\gamma(x)=\sup \Big\{r>0:\frac{1}{r^{n-2}}\int_{B(x,r)} V(x)dx\leq 1\Big\},~~x\in\RN,
\end{equation}
which was introduced by Shen \cite{S}.

For Schr\"{o}dinger operator $L=-\Delta+V$, Betancor et al. established the $L^p$-boundedness properties of the variation operators related with the heat semigroup $\{e^{-tL}\}_{t>0}$ in \cite{BFHR}.
It is a natural and interesting question that whether we can establish the boundedness properties of the variation operators associated with $\{e^{-t\mathcal L}\}_{t>0}$
on $L^p(\RN)$. Our main result is as follows.
\begin{theorem}\label{thonp}
Assume that $V\in RH_{q_0}(\RN)$, where $q_0\in (n/2, \infty)$ and $n\geq 5$. For $\rho>2$, there exists a
 constant $C>$ such that

$$\|\mathcal{V}_{\rho}(e^{-t\mathcal{L}})(f)\|_{L^p(\RN)}\leq C\|f\|_{L^p(\RN)},~~~~~\quad  1<p<\infty.$$

\end{theorem}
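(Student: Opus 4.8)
The plan is to compare the variation operator $\mathcal{V}_\rho(e^{-t\mathcal{L}})$ with the corresponding variation operator $\mathcal{V}_\rho(e^{-t(-\Delta)^2})$ for the biharmonic semigroup, whose $L^p$-boundedness is already known (this is the ``variation inequalities associated with the heat semigroup generated by the biharmonic operator'' referred to in the abstract). Writing $\mathcal{B}_t(x,y)$ for the kernel of $e^{-t\mathcal{L}}$ and $\mathcal{W}_t(x,y)$ for the kernel of $e^{-t(-\Delta)^2}$, we set $\mathcal{D}_t(x,y) := \mathcal{W}_t(x,y) - \mathcal{B}_t(x,y)$ and decompose
$$
\mathcal{V}_\rho(e^{-t\mathcal{L}})(f) \;\le\; \mathcal{V}_\rho(e^{-t(-\Delta)^2})(f) \;+\; \Big\|\int_{\RN}\mathcal{D}_t(\cdot,y)f(y)\,dy\Big\|_{E_\rho}.
$$
The first term is handled by the known biharmonic variation inequality. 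For the second term it suffices, by the fundamental theorem of calculus applied to $t\mapsto \mathcal{D}_t(x,y)$, to bound the ``subordinated'' operator $\int_0^\infty\int_{\RN}|\partial_t\mathcal{D}_t(x,y)|\,|f(y)|\,dy\,dt$, since the $E_\rho$-seminorm of a $C^1$ function $w$ is controlled by $\int_0^\infty|w'(t)|\,dt$. Thus the whole problem reduces to proving that the operator with kernel $\int_0^\infty|\partial_t\mathcal{D}_t(x,y)|\,dt$ is bounded on $L^p(\RN)$ for $1<p<\infty$.

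The key analytic input is a quantitative estimate on the difference kernel $\mathcal{D}_t$ and its time derivative. By the perturbation (Duhamel) formula
$$
\mathcal{D}_t(x,y) = \int_0^t\int_{\RN}\mathcal{B}_{t-s}(x,z)\,V(z)^2\,\mathcal{W}_s(z,y)\,dz\,ds,
$$
together with the Gaussian-type upper bound \eqref{B} for $\mathcal{B}_t$, the analogous bound for $\mathcal{W}_t$, and the reverse-H\"older / $\gamma(x)$ machinery of Shen \cite{S} and Sugano \cite{Sugano}, one expects an estimate of the form
$$
\int_0^\infty \big|\partial_t\mathcal{D}_t(x,y)\big|\,dt \;\le\; \frac{C}{|x-y|^{n}}\Big(\frac{|x-y|}{\gamma(x)}\Big)^{-\delta}
$$
for some $\delta>0$ and $|x-y|\le C\gamma(x)$, with faster (in fact arbitrary polynomial, via $RH_{q_0}$) decay for $|x-y|\gtrsim\gamma(x)$. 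The gain $(|x-y|/\gamma(x))^{-\delta}$ comes from integrating $V^2$ against the heat kernel and invoking the standard consequence of $V\in RH_{q_0}$ with $q_0>n/2$ that $\int_{B(x,r)}V(z)^2\,dz \lesssim r^{n-4}(r/\gamma(x))^{2-4/q_0}$-type bounds on small scales. This estimate shows that the kernel $K(x,y):=\int_0^\infty|\partial_t\mathcal{D}_t(x,y)|\,dt$ is dominated by a kernel that is locally integrable in $y$ uniformly in $x$ (and in $x$ uniformly in $y$, by symmetry of the roles after using $V\in RH_{q_0}\subset A_\infty$ and a dual Duhamel expansion), so the operator it defines maps $L^\infty\to L^\infty$ and $L^1\to L^1$, hence $L^p\to L^p$ for all $1<p<\infty$ by interpolation; in fact the global decay makes the kernel that of a bounded operator outright.

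I expect the main obstacle to be the derivative estimate on $\mathcal{D}_t$: controlling $\partial_t\mathcal{D}_t$ requires either differentiating the Duhamel formula (which produces a boundary term $\mathcal{B}_{0}$-type singularity at $s=t$ that must be re-expressed using $\partial_t\mathcal{B}_{t-s} = -\partial_s\mathcal{B}_{t-s}$ and integrated by parts in $s$) or using analytic-semigroup Cauchy-integral representations for $\partial_t e^{-t\mathcal{L}}$ and $\partial_t e^{-t(-\Delta)^2}$. Either route forces one to control $|\partial_s\mathcal{W}_s(z,y)|$ and $|\partial_s \mathcal{B}_s(z,y)|$, i.e.\ the time-derivative versions of \eqref{B}, and then to perform the $s$-integration and the $t$-integration while keeping track of the interaction between the scale $|x-y|$ and the critical scale $\gamma(x)$. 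The fourth-order (rather than second-order) nature of the operator means the kernel bounds carry $|x-y|^{4/3}/t^{1/3}$ in the exponent instead of $|x-y|^2/t$, so the bookkeeping in these $t$- and $s$-integrals is where the real care is needed; once the pointwise kernel bound above is in hand, the passage to $L^p$-boundedness is routine.
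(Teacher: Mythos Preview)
Your overall strategy---reduce to the biharmonic variation inequality and control the difference via $\int_0^\infty|\partial_t\mathcal D_t|\,dt$---is the right idea, and is essentially what the paper does for the \emph{local} part. The gap is in the far-field claim. You assert that $\int_0^\infty|\partial_t\mathcal D_t(x,y)|\,dt$ has ``arbitrary polynomial decay for $|x-y|\gtrsim\gamma(x)$'', and this is what drives your Schur/$L^1$--$L^\infty$ argument. But this is false: the extra decay factor $(1+\sqrt t/\gamma^2(x))^{-N}$ is a feature of the Schr\"odinger kernel $\mathcal B_t$ (Lemma~\ref{k6}), \emph{not} of the biharmonic kernel $\mathcal W_t=b_t$. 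One has only $\int_0^\infty|\partial_t b_t(x-y)|\,dt\sim|x-y|^{-n}$ with no improvement at large $|x-y|$, and since $\mathcal B_t$ is negligible there, the difference $\mathcal D_t=b_t-\mathcal B_t$ inherits this slow tail. Thus $K(x,y)\sim|x-y|^{-n}$ at infinity, $\int K(x,y)\,dy=\infty$, and the $L^\infty\to L^\infty$ bound fails. (Separately, your near-diagonal estimate should read $(|x-y|/\gamma(x))^{+\delta}$, not $-\delta$; as written it is not locally integrable either.)

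This is precisely why the paper does not compare the two semigroups globally. It localizes at scale $\gamma(x)$, splitting
\[
\mathcal V_\rho(e^{-t\mathcal L})\le
\mathcal V_\rho\big(e^{-t\mathcal L}_{loc}-e^{-t\Delta^2}_{loc}\big)
+\mathcal V_\rho\big(e^{-t\Delta^2}_{loc}\big)
+\mathcal V_\rho\big(e^{-t\mathcal L}-e^{-t\mathcal L}_{loc}\big).
\]
The first piece is your Duhamel argument restricted to $|x-y|<\gamma(x)$ (where Lemma~\ref{V} applies and one gets $\lesssim Mf$); the third uses the genuine extra decay of $\mathcal B_t$ on $|x-y|>\gamma(x)$ (again $\lesssim Mf$). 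The crucial middle piece---the tail of the biharmonic semigroup that your pointwise bound cannot reach---is \emph{not} handled by kernel size alone: the paper shows that $b(x-y,\cdot)$ is an $E_\rho$-valued Calder\'on--Zygmund kernel and invokes $L^p$-boundedness of the maximal truncation $T^*f(x)=\sup_{\varepsilon>0}\bigl\|\int_{|x-y|>\varepsilon}b(x-y,t)f(y)\,dy\bigr\|_{E_\rho}$. That vector-valued CZ step is the ingredient missing from your proposal.
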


We should note that, our results are not contained in the paper of Bui \cite{Bui}, because the estimates of the heat kernel are not the same.

On the other hand, Zhang and Wu \cite{ZW} studied the boundedness of variation operators associated with the heat semigroup $\{e^{-tL}\}_{t>0}$ on Morrey spaces
related to the non-negative potential $V$.  Tang and Dong \cite{TD} introduced Morrey spaces related to non-negative potential $V$ for extending the
boundedness of Schr\"{o}dinger type operators in Lebesgue spaces.

\begin{definition}\label{Morrey}
Let $1\leq p<\infty, \alpha\in \mathbb{R}$ and $0\leq \lambda<n$. For $f\in L^p_{loc}(\RN)$ and $V\in RH_q (q>1)$,
we say $f\in L^{p,\lambda}_{\alpha, V}(\RN)$, if
 \begin{equation*}
\|f\|^p_{L^{p,\lambda}_{\alpha, V}}=\sup_{B(x_0,r)\subset\RN}\Big(1+\frac{r}{\gamma(x_0)}\Big)^\alpha r^{-\lambda}\int_{B(x_0,r)}|f(x)|^pdx<\infty,
\end{equation*}
where $B(x_0,x)$ denotes a ball centered at $x_0$ and with radius $r$, $\gamma(x_0)$ is defined as in (\ref{gamma}).
\end{definition}

For more information about the Morrey spaces associated with differential operators, see \cite{HuangZ, Song, YSY}.

Then, we can also obtain the boundedness of the variation operators associated to the heat semigroup $\{e^{-t\mathcal L}\}_{t>0}$ on Morry spaces.

\begin{theorem}\label{thonm}
Let $V\in RH_{q_0}(\RN)$ for $q_0\in (n/2, \infty)$, $n\geq 5$ and $\rho>2$. Assume that $\alpha\in \mathbb{\mathbb{R}}$ and $\lambda\in (0,n)$.
There exists a constant $C>0$ such that

$$\|\mathcal{V}_{\rho}(e^{-t\mathcal{L}})(f)\|_{L^{p,\lambda}_{\alpha, V}(\RN)}\leq C\|f\|_{L^{p,\lambda}_{\alpha, V}(\RN)},~~~~~~\quad  1< p<\infty.$$
\end{theorem}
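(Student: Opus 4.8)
The plan is to \emph{localize}: by the very definition of $\|\cdot\|_{L^{p,\lambda}_{\alpha,V}(\RN)}$ it suffices to fix an arbitrary ball $B=B(x_{0},r)\subset\RN$ and to prove
$$\Big(1+\frac{r}{\gamma(x_{0})}\Big)^{\alpha}r^{-\lambda}\int_{B}\abs{\mathcal{V}_{\rho}(e^{-t\mathcal{L}})(f)(x)}^{p}\,dx\le C\|f\|^{p}_{L^{p,\lambda}_{\alpha,V}(\RN)},$$
with $C$ independent of $B$. Splitting $f=f_{1}+f_{2}$ with $f_{1}=f\chi_{2B}$, $f_{2}=f\chi_{\RN\setminus 2B}$, and using that $e^{-t\mathcal{L}}$ is linear while $\|\cdot\|_{E_{\rho}}$ is a seminorm, we get $\mathcal{V}_{\rho}(e^{-t\mathcal{L}})(f)\le\mathcal{V}_{\rho}(e^{-t\mathcal{L}})(f_{1})+\mathcal{V}_{\rho}(e^{-t\mathcal{L}})(f_{2})$, so the two pieces may be treated separately (that $e^{-t\mathcal{L}}f$ makes sense for $f\in L^{p,\lambda}_{\alpha,V}(\RN)$ follows routinely from \eqref{B}). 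For $f_{1}$ I would simply invoke Theorem \ref{thonp}: since $f_{1}\in L^{p}(\RN)$, $\int_{B}\abs{\mathcal{V}_{\rho}(e^{-t\mathcal{L}})(f_{1})}^{p}\le C\int_{2B}\abs{f}^{p}$, and then the elementary comparisons $(1+r/\gamma(x_{0}))^{\alpha}\le 2^{\abs{\alpha}}(1+2r/\gamma(x_{0}))^{\alpha}$ and $r^{-\lambda}=2^{\lambda}(2r)^{-\lambda}$ give $(1+r/\gamma(x_{0}))^{\alpha}r^{-\lambda}\int_{B}\abs{\mathcal{V}_{\rho}(e^{-t\mathcal{L}})(f_{1})}^{p}\le C\|f\|^{p}_{L^{p,\lambda}_{\alpha,V}(\RN)}$.

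The heart of the matter is $f_{2}$, and I would first establish, for every $N>0$, the pointwise bound
$$\mathcal{V}_{\rho}(e^{-t\mathcal{L}})(f_{2})(x)\le C_{N}\sum_{j\ge 1}(2^{j}r)^{-n}\Big(1+\frac{2^{j}r}{\gamma(x)}\Big)^{-N}\int_{2^{j+1}B}\abs{f(y)}\,dy,\qquad x\in B.$$
Two inputs enter: (i) since $\mathrm{supp}\,f_{2}\subset\RN\setminus 2B$ and $x\in B$, the map $t\mapsto e^{-t\mathcal{L}}f_{2}(x)$ is $C^{1}$ on $(0,\infty)$ with $\partial_{t}e^{-t\mathcal{L}}f_{2}(x)$ integrable in $t$, so the $\rho$-variation is dominated by the total variation, $\mathcal{V}_{\rho}(e^{-t\mathcal{L}})(f_{2})(x)\le\int_{0}^{\infty}\abs{\partial_{t}e^{-t\mathcal{L}}f_{2}(x)}\,dt$; and (ii) the heat-kernel derivative estimate $\abs{\partial_{t}\mathcal{B}_{t}(x,y)}\le C\,t^{-1-\frac n4}e^{-A\abs{x-y}^{4/3}/t^{1/3}}(1+t^{1/4}/\gamma(x))^{-N}$, which follows from \eqref{B}, the analyticity of the semigroup, and the faster decay for large $t$ coming from $V\in RH_{q_{0}}$ (part of the machinery behind Theorem \ref{thonp}). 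Integrating (ii) in $t$ via $t=\abs{x-y}^{4}s$ yields $\int_{0}^{\infty}\abs{\partial_{t}\mathcal{B}_{t}(x,y)}\,dt\le C_{N}\abs{x-y}^{-n}(1+\abs{x-y}/\gamma(x))^{-N}$ (the range $t\lesssim\abs{x-y}^{4}$ is harmless because there the exponential is $\lesssim_{N}(\gamma(x)/\abs{x-y})^{N}$); splitting $\RN\setminus 2B=\bigcup_{j\ge1}(2^{j+1}B\setminus 2^{j}B)$, where $\abs{x-y}\sim 2^{j}r$, gives the displayed bound.

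To finish, by Hölder's inequality and Definition \ref{Morrey}, $\int_{2^{j+1}B}\abs{f}\le C(2^{j}r)^{n/p'+\lambda/p}(1+2^{j}r/\gamma(x_{0}))^{-\alpha/p}\|f\|_{L^{p,\lambda}_{\alpha,V}(\RN)}$; since $\tfrac n{p'}+\tfrac\lambda p-n=\tfrac{\lambda-n}{p}<0$ this gives
$$\mathcal{V}_{\rho}(e^{-t\mathcal{L}})(f_{2})(x)\le C_{N}\,r^{\frac{\lambda-n}{p}}\|f\|_{L^{p,\lambda}_{\alpha,V}(\RN)}\sum_{j\ge1}2^{j\frac{\lambda-n}{p}}\Big(1+\frac{2^{j}r}{\gamma(x)}\Big)^{-N}\Big(1+\frac{2^{j}r}{\gamma(x_{0})}\Big)^{-\alpha/p}.$$
If $r\le\gamma(x_{0})$, Shen's estimates give $\gamma(x)\sim\gamma(x_{0})$ for $x\in B$, so the product of the last two factors is $\lesssim(1+2^{j}r/\gamma(x_{0}))^{\abs{\alpha}/p-N}\le 1$ once $N>\abs{\alpha}/p$; if $r>\gamma(x_{0})$, one uses instead $\gamma(x)\le C\gamma(x_{0})(1+r/\gamma(x_{0}))^{k_{0}/(k_{0}+1)}$, so that $1+2^{j}r/\gamma(x)\gtrsim 2^{j}(r/\gamma(x_{0}))^{1/(k_{0}+1)}$ and, for $N$ large, the $j$-sum is $\lesssim 1$. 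In both cases the geometric factor $2^{j(\lambda-n)/p}$ wins and we obtain $\mathcal{V}_{\rho}(e^{-t\mathcal{L}})(f_{2})(x)\le C\,r^{(\lambda-n)/p}(1+r/\gamma(x_{0}))^{-\alpha/p}\|f\|_{L^{p,\lambda}_{\alpha,V}(\RN)}$ for $x\in B$ (when $\alpha<0$ one just drops the last factor, which is $\le1$). Raising to the $p$-th power, integrating over $B$, and noting $\abs{B}\sim r^{n}$ gives $(1+r/\gamma(x_{0}))^{\alpha}r^{-\lambda}\int_{B}\abs{\mathcal{V}_{\rho}(e^{-t\mathcal{L}})(f_{2})}^{p}\le C\|f\|^{p}_{L^{p,\lambda}_{\alpha,V}(\RN)}$, which together with the bound for $f_{1}$ proves the theorem.

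The main obstacle is the global part: deriving the pointwise estimate with the $\gamma(x)$-decay and making the dyadic sum converge uniformly over all balls and all signs of $\alpha$. The case $\alpha<0$ is the delicate one, since the bare biharmonic semigroup $e^{-t(-\Delta)^{2}}$ offers no decay beyond $\abs{x-y}^{-n}$ (scaling forbids it), so the extra exponential-type decay of the kernel of $\mathcal{L}=(-\Delta)^{2}+V^{2}$ in $t^{1/4}/\gamma(x)$ is genuinely needed and must be transferred to $\gamma(x_{0})$ through Shen's comparison in the two regimes $r\le\gamma(x_{0})$ and $r>\gamma(x_{0})$. Everything else — the triangle inequality for $\|\cdot\|_{E_{\rho}}$, the reduction of the variation of the far part to a total variation, and the $L^{p}$ input — is routine once Theorem \ref{thonp} is available.
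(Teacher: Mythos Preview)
Your proposal is correct and follows essentially the same route as the paper: decompose $f$ over dyadic annuli around a fixed ball, handle the local piece by Theorem~\ref{thonp}, and control the far piece by reducing the $\rho$-variation to total variation and integrating the time-derivative kernel bound of Lemma~\ref{k6}(ii). The only differences are cosmetic---you carry $\gamma(x)$ and then compare with $\gamma(x_0)$ via Lemma~\ref{veq}, while the paper carries $\gamma(y)$ and $\gamma(x_0)$ directly---and you treat both signs of $\alpha$ whereas the paper writes out only $\alpha<0$.

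One small correction: your parenthetical that on $t\lesssim|x-y|^4$ ``the exponential is $\lesssim_N(\gamma(x)/|x-y|)^N$'' is not literally true (near $t=|x-y|^4$ the exponential is just a constant and knows nothing about $\gamma$). To extract the extra factor $(1+|x-y|/\gamma(x))^{-N}$ from $\int_0^{|x-y|^4}|\partial_t\mathcal{B}_t(x,y)|\,dt$ when $|x-y|>\gamma(x)$, you must split once more at $t=\gamma(x)^4$: on $t<\gamma(x)^4$ the exponential alone gives $e^{-c(|x-y|/\gamma(x))^{4/3}}$, while on $\gamma(x)^4<t<|x-y|^4$ you need the weight $(1+t^{1/4}/\gamma(x))^{-N}$ together with the exponential---exactly the three-range computation the paper carries out in \eqref{eqB1}--\eqref{eqB2}. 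With that refinement your pointwise bound and the rest of the argument go through as written.
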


The organization of the paper is as follows. Section \ref{Sec:2} is devoted to giving the proof of Theorem \ref{thonp}. In order to prove this theorem, we should study the strong $L^p$-boundednessof the variation operators associated to $\{e^{-t\Delta^2}\}_{t>0}$ first.
 We will give the proof of Theorem \ref{thonm} in Section \ref{Sec:3}.
We also obtain the strong
$L^p(\RN)$ estimates $(p>1)$ of the generalized Poisson operators $\mathcal{P}_{t,\mathcal{L}}^\sigma$ on $L^p$ as well as  Morrey spaces related to non-negative potential $V$, respectively, in Section \ref{Sec:2} and Section \ref{Sec:3}.

\medskip
Throughout this paper, the symbol $C$ in an inequality always denotes a constant which may depend on some indices, but never on the functions $f$ in consideration.

\vskip3mm
\section{ Variation inequalities related to $\{e^{-t\mathcal{L}}\}_{t>0}$ on $L^p$ spaces}\label{Sec:2}

In this section, we first recall some properties of biharmonic heat kernel. With these kernel estimates, we will
give the proof of $L^p$-boundedness properties of the variation operators related to $\{e^{-t\Delta^2}\}_{t>0}$,
which is crucial in the proof of Theorem \ref{thonp}.

\subsection{\bf Biharmonic heat kernel.}\label{sec:kernel}
Consider the following Cauchy problem for the biharmonic heat equation
\begin{equation*}
\begin{cases}
(\partial_t+\Delta^2)u(x,t)=0 \ \ \  \text{in } \  \mathbb R_+^{n+1}\\
u(x,0)=f(x) \ \ \  \ \ \  \ \ \  \  \  \text{in } \  \mathbb R^{n}.
\end{cases}
\end{equation*}
Its solution is  given by
$$u(x,t)=e^{-t{\Delta^2}}f(x)=\int_{\RN}b(x-y,t)f(y)dy,$$
where $$b(x,t)=t^{-\frac{n}{4}}g(\eta),$$
where $\eta={x}{t^{-\frac{1}{4}}}$ and
\begin{equation}\label{g}
g(\eta)=(2\pi)^{-\frac{n}{2}}\int_{\RN}e^{i\eta k-|k|^4}dk=\alpha_n|\eta|\int_0^\infty e^{-s^4}J_{(n-2)/2}(|\eta|s)ds,~~~~\eta\in\RN,
\end{equation}
where $J_v$ denotes the $v$-th Bessel function and $\alpha_n>0$ is a normalization constant such that
\begin{equation*}
\int_{\RN}g(\eta)d\eta=1.
\end{equation*}
Then, we have the following several results by classical analysis(for details, see \cite{SW}):
 \begin{itemize}
\item[(1)]
 If $f\in L^p(\RN)$, $1\le p\le \infty,$ then $$\displaystyle \lim_{t\rightarrow 0}u(x,t)=f(x)\quad  a.e. \quad  x\in \RN,$$ and
\begin{equation*}
\norm{u(\cdot, t)}_{L^p(\RN)}\le C\norm{f}_{L^p(\RN)}.
\end{equation*}
\item [(2)] If $1\le p<\infty,$ then
\begin{equation*}
\norm{u(\cdot, t)-f}_{L^p(\RN)}\rightarrow 0, \quad  \hbox{when} \quad t\rightarrow 0^+.
\end{equation*}
\end{itemize}

And $g(\eta)$ satisfies the following estimates
 $$|g(\eta)|\leq C\Big(1+|\eta|\Big)^{-\frac{n}{3}}e^{-A_1 |\eta|^{\frac{4}{3}}},$$
\begin{equation}\label{dg}
\frac{d^m g}{d\eta^m}(\eta)\leq C_m\Big(1+|\eta|\Big)^{-\frac{n-m}{3}}e^{-A_1 |\eta|^{\frac{4}{3}}},~~~~~~m\in \mathbb{N},
\end{equation}
see \cite{LK}.

We should note that the heat semigroup $e^{-t\Delta^2}$ doesn't have the positive preserving property, i.e., when $f\ge 0$, then $e^{-t\Delta^2}f\ge 0$ maybe not established. So, the boundedness of the variation operators associated to $\{e^{-t\Delta^2}\}_{t>0}$ cannot be deduced by the results in \cite{JR}.

For the heat kernel $b(x,t)$ of the semigroup $e^{-t\Delta^2}$, we have the following estimates.
\begin{lemma}\label{K}
For every $t>0$ and $\RN$, we have
\begin{equation}\label{k}
\big|b(x,t)\big|\leq Ct^{-\frac{n}{4}}e^{-A_1 |xt^{-\frac{1}{4}}|^\frac{4}{3}},
\end{equation}
\begin{equation}\label{bxt}
\big|\partial_t^l\nabla_x^kb(x,t)\big|\leq C\Big(t^{\frac{1}{4}}+|x|\Big)^{-n-k-4l},\ \  \forall k,l\geq 1,
\end{equation}
\begin{equation}\label{kt}
\big|\frac{\partial }{\partial t}b(x,t)\big|\leq Ct^{-\frac{n}{4}-1}\Big(1+|xt^{-\frac{1}{4}}|\Big)^{-\frac{n}{3}+\frac{4}{3}}e^{-A_1 |xt^{-\frac{1}{4}}|^\frac{4}{3}},
\end{equation}
\begin{equation}\label{kx}
\big|\nabla_x b(x,t)\big|\leq Ct^{-\frac{n+1}{4}}\Big(1+|xt^{-\frac{1}{4}}|\Big)^{-\frac{n-1}{3}}e^{-A_1 |xt^{-\frac{1}{4}}|^\frac{4}{3}},
\end{equation}
where $\displaystyle A_1=\frac{3\cdot2^{1/3}}{16}$.
\end{lemma}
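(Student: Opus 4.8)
The plan is to derive all four estimates from the self-similar representation $b(x,t)=t^{-n/4}g(\eta)$ with $\eta=xt^{-1/4}$, together with the pointwise bounds \eqref{dg} on $g$ and its derivatives. Estimate \eqref{k} is immediate, since $|g(\eta)|\le C(1+|\eta|)^{-n/3}e^{-A_1|\eta|^{4/3}}\le Ce^{-A_1|\eta|^{4/3}}$, so $|b(x,t)|=t^{-n/4}|g(\eta)|\le Ct^{-n/4}e^{-A_1|xt^{-1/4}|^{4/3}}$.

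For the two first-order estimates I would compute, by the chain rule and the identity $xt^{-5/4}=\eta t^{-1}$,
\[
\nabla_x b(x,t)=t^{-\frac{n+1}{4}}(\nabla g)(\eta),\qquad
\partial_t b(x,t)=-t^{-\frac{n}{4}-1}\Big(\tfrac{n}{4}\,g(\eta)+\tfrac14\,\eta\cdot(\nabla g)(\eta)\Big).
\]
Then \eqref{kx} follows at once from the $m=1$ case of \eqref{dg}. For \eqref{kt}, I would combine the $m=0$ and $m=1$ cases of \eqref{dg} with $(1+|\eta|)^{-n/3}\le (1+|\eta|)^{-n/3+4/3}$ and $|\eta|\,(1+|\eta|)^{-(n-1)/3}\le (1+|\eta|)^{1-(n-1)/3}=(1+|\eta|)^{-n/3+4/3}$, so that both terms in the bracket are dominated by $(1+|\eta|)^{-n/3+4/3}e^{-A_1|\eta|^{4/3}}$.

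The estimate \eqref{bxt} requires a bit more care. The key structural fact, which I would establish by a double induction on $k$ and $l$ starting from $(k,l)=(0,0)$, is that
\[
\partial_t^l\nabla_x^k b(x,t)=t^{-\frac{n}{4}-l-\frac{k}{4}}\,G_{k,l}(\eta),\qquad
G_{k,l}(\eta)=\sum_j c_j\,\eta^{\beta_j}\,(\nabla^{m_j}g)(\eta),
\]
a finite sum in which every multi-index satisfies $|\beta_j|\le l$ and $m_j\le k+l$. The inductive step in $t$ rests on $\partial_t\big(t^{\mu}G(\eta)\big)=t^{\mu-1}\big(\mu\,G(\eta)-\tfrac14\,\eta\cdot\nabla G(\eta)\big)$, which raises both the polynomial degree and the order of differentiation by at most one (matching the increment of $l$), while the step in $x$ uses $\nabla_x\big(t^{\mu}G(\eta)\big)=t^{\mu-\frac14}\nabla G(\eta)$, which raises the order of differentiation by one and leaves the degree unchanged. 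Granting this, \eqref{dg} gives
\[
\big|\eta^{\beta_j}(\nabla^{m_j}g)(\eta)\big|\le C|\eta|^{l}(1+|\eta|)^{-\frac{n-k-l}{3}}e^{-A_1|\eta|^{4/3}}\le C(1+|\eta|)^{\frac{4l+k-n}{3}}e^{-A_1|\eta|^{4/3}},
\]
and since the exponential factor dominates any polynomial growth, this is $\le C(1+|\eta|)^{-(n+k+4l)}$. Substituting $\eta=xt^{-1/4}$ and using the identity $t^{-\frac{n}{4}-l-\frac{k}{4}}\,(1+|xt^{-1/4}|)^{-(n+k+4l)}=(t^{1/4}+|x|)^{-(n+k+4l)}$ then yields \eqref{bxt}.

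The only genuinely delicate point is the bookkeeping in the double induction for \eqref{bxt}: one must track simultaneously the degree of the polynomial prefactors and the order of the $g$-derivatives occurring in $G_{k,l}$, so that the powers of $(1+|\eta|)$ balance exactly against $t^{-n/4-l-k/4}$ to produce the exponent $-(n+k+4l)$. Everything else is a routine application of the bounds \eqref{dg}, and in particular there is no need to retain the Gaussian-type factor in \eqref{bxt}.
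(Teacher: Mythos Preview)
Your proposal is correct and follows essentially the same route as the paper: both derive everything from the self-similar form $b(x,t)=t^{-n/4}g(xt^{-1/4})$ together with the pointwise bounds \eqref{dg} on $g$ and its derivatives. The paper merely cites \cite{LK} for \eqref{k} and \eqref{bxt} and says \eqref{kt}, \eqref{kx} follow by ``simple calculations,'' whereas you actually carry those calculations out (in particular the double induction giving the structure of $G_{k,l}$). One small slip: in your chain for \eqref{bxt} the intermediate bound $|\eta^{\beta_j}(\nabla^{m_j}g)(\eta)|\le C|\eta|^{l}(1+|\eta|)^{-(n-k-l)/3}e^{-A_1|\eta|^{4/3}}$ is not literally true for $|\eta|<1$ when $|\beta_j|<l$; use $(1+|\eta|)^{l}$ instead of $|\eta|^{l}$, and then the next inequality $\le C(1+|\eta|)^{(4l+k-n)/3}e^{-A_1|\eta|^{4/3}}$ is exactly right.
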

\begin{proof}
 For (\ref{k}) and (\ref{bxt}), see Lemma 2.4 in \cite{LK}. From (\ref{g}) and (\ref{dg}), and through some simple calculations, we can derive (\ref{kt}) and (\ref{kx}).
\end{proof}
\medskip
\subsection{\bf Variation inequalities related to $e^{-t\Delta^2}$}

By (1) in Section \ref{sec:kernel}, we know that the operator $e^{-t\Delta^2}$ is a contraction on  $L^1(\RN)$ and $L^\infty(\RN)$. So, $e^{-t\Delta^2}$ is contractively regular. And then, by \cite[Corollary 3.4]{MXu}, we have the following theorem(For more details, see \cite{MXu}).

\begin{theorem}\label{E}
For $\rho>2$,  there exists a constant $C>0$ such that
\begin{equation*}
\|\mathcal{V}_{\rho}(e^{-t\Delta^2})(f)\|_{L^p(\RN)}\leq C\|f\|_{L^p(\RN)},~~~~~~\forall f\in L^p(\RN),\quad 1<p<\infty.
\end{equation*}
\end{theorem}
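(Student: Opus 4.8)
\textbf{Proof proposal for Theorem \ref{E}.}
The plan is to deduce this variational estimate from the general transference principle for variation operators of symmetric diffusion-type semigroups. First I would recall that, by part (1) of Section \ref{sec:kernel}, the family $\{e^{-t\Delta^2}\}_{t>0}$ is a strongly continuous semigroup that is simultaneously a contraction on $L^1(\RN)$ and on $L^\infty(\RN)$; interpolation then gives contractivity on every $L^p(\RN)$, $1\le p\le\infty$. Moreover $e^{-t\Delta^2}$ is self-adjoint on $L^2(\RN)$ (its kernel $b(x-y,t)$ is real and symmetric in $x,y$, being the inverse Fourier transform of the real even symbol $e^{-t|k|^4}$), so the semigroup is contractively regular in the sense required by the abstract theory.

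Next I would invoke the abstract result of Le Merdy and Xu, \cite[Corollary 3.4]{MXu}: for a contractively regular (equivalently, a positive contraction after a standard dilation argument, or more directly a semigroup admitting a bounded $H^\infty$ functional calculus uniformly on $L^p$) semigroup $\{T_t\}_{t>0}$ on $L^p(\RN)$ with $1<p<\infty$, the variation operator $\mathcal V_\rho(T_t)$ is bounded on $L^p(\RN)$ for every $\rho>2$. Applying this with $T_t=e^{-t\Delta^2}$ yields exactly
\begin{equation*}
\|\mathcal{V}_{\rho}(e^{-t\Delta^2})(f)\|_{L^p(\RN)}\leq C\|f\|_{L^p(\RN)},\qquad f\in L^p(\RN),\ 1<p<\infty,
\end{equation*}
which is the claim. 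The point to check carefully is that the hypotheses of \cite[Corollary 3.4]{MXu} are genuinely met: one needs the uniform $L^1$--$L^\infty$ contraction (already recorded), the analyticity of the semigroup on $L^p$ (which follows from the subordination/functional-calculus properties of $\Delta^2$, or can be read off from the kernel bounds \eqref{k} and \eqref{kt}), and strong continuity, so that the discretization in the definition of $\mathcal V_\rho$ is controlled by the abstract square-function bound.

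The main obstacle is conceptual rather than computational: unlike the classical heat semigroup, $e^{-t\Delta^2}$ is \emph{not} positivity preserving (as emphasized in the remark preceding the statement), so one cannot appeal to the Jones--Reinhold framework \cite{JR} for positive contractions, and must instead route the argument through the "contractively regular" hypothesis, which is exactly the generality in which \cite{MXu} operates. Once it is observed that contractive regularity on $L^p$ is implied by the joint $L^1$--$L^\infty$ contractivity together with self-adjointness on $L^2$ (again via interpolation and duality), the theorem follows immediately; no further kernel estimates beyond those assembled in Lemma \ref{K} are needed here, although \eqref{k}--\eqref{kx} will be the real workhorses in the subsequent perturbation argument that passes from $\Delta^2$ to $\mathcal L=(-\Delta)^2+V^2$.
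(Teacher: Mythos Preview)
Your proposal is correct and follows essentially the same route as the paper: the paper's entire argument for Theorem \ref{E} is the paragraph immediately preceding the statement, which observes that the $L^1$ and $L^\infty$ bounds from Section \ref{sec:kernel}(1) make $\{e^{-t\Delta^2}\}_{t>0}$ contractively regular and then cites \cite[Corollary 3.4]{MXu}. Your additional remarks on self-adjointness, analyticity, and the failure of positivity preservation (hence the inapplicability of \cite{JR}) are accurate elaborations but do not change the strategy.
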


\vskip3mm
\subsection{\bf  Variation inequalities related to $\{e^{-t\mathcal{L}}\}_{t>0}$ }

First, we recall some properties of the auxiliary function $\gamma(x)$, which will be used later.
\begin{lemma}[\cite{S}]\label{veq}
Let $V\in RH_{\frac{n}{2}}(\RN)$, then there exist $C$ and $k_0>1$, such that for all $x, y\in \RN$,
\begin{equation*}
\frac{1}{C}\gamma(x)\Big(1+\frac{|x-y|}{\gamma(x)}\Big)^{-k_0}\leq \gamma(y)\leq C\gamma(x)\Big(1+\frac{|x-y|}{\gamma(x)}\Big)^{\frac{k_0}{k_0+1}}.
\end{equation*}
In particular, $\gamma(x)\sim\gamma(y)$ if $|x-y|<C\gamma(x)$.
\end{lemma}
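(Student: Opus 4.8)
The plan is to deduce everything from the single auxiliary quantity $\psi_z(r):=\frac{1}{r^{n-2}}\int_{B(z,r)}V$, for which the definition \eqref{gamma} amounts to the normalisation $\psi_z(\gamma(z))=1$; this identity follows from the continuity of $r\mapsto\psi_z(r)$, from $\psi_z(r)\to0$ as $r\to0$, and from $\gamma(z)$ being a supremum. The whole argument rests on two scaling estimates for $\psi_z$, both valid around an arbitrary centre $z$. First, a sub-scaling bound coming from the reverse Hölder hypothesis: writing $\beta:=2-n/q_0>0$, Hölder's inequality on the small ball together with the monotonicity of $\int V^{q_0}$ and the $RH_{q_0}$ inequality on the large ball give, for $0<r\le R$, $\psi_z(r)\le C(r/R)^{\beta}\psi_z(R)$. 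Secondly, a super-scaling bound coming from the fact that $V\,dx$ is a doubling measure (a standard consequence of $RH_{q_0}\subset A_\infty$): for $0<r\le R$, $\psi_z(R)\le C(R/r)^{k_1}\psi_z(r)$, where $k_1$ is the doubling exponent minus $(n-2)$. I would then fix $k_0$ by $k_0+1=k_1/\beta$, enlarging $k_0$ if necessary so that $k_0>1$; enlarging only weakens both target inequalities.

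I would first treat the nearby regime $|x-y|\le\rho:=\gamma(x)$, which yields the final ``in particular'' assertion. Containing $B(x,(K-1)\rho)\subset B(y,K\rho)$ and invoking the sub-scaling bound at $x$ shows $\psi_y(K\rho)\ge cK^{\beta}/C>1$ for $K$ large, so $\gamma(y)\le K\rho$; in the other direction I bound $\int_{B(y,\rho/K)}V$ by applying Hölder on the small ball $B(y,\rho/K)$ and $RH_{q_0}$ on $B(x,2\rho)\supset B(y,\rho/K)$, which produces a gain $K^{\,n/q_0-2}\to0$ and hence $\psi_y(\rho/K)\le1$, giving $\gamma(y)\ge\rho/K$. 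Together these give $\gamma(y)\sim\gamma(x)$.

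For the general regime write $R=|x-y|$ and assume $R\ge\rho$, so that $1+R/\rho\sim R/\rho$. For the lower bound on $\gamma(y)$ I set $t_*=\tfrac1C\rho(1+R/\rho)^{-k_0}$ and estimate $\int_{B(y,t_*)}V$ by Hölder on $B(y,t_*)$, $RH_{q_0}$ on $B(x,2R)\supset B(y,t_*)$, and the super-scaling bound $\psi_x(2R)\le C(R/\rho)^{k_1}$; a direct computation collapses the exponents to $(R/\rho)^{\,k_1-\beta(k_0+1)}=(R/\rho)^{0}$ by the choice of $k_0$, so $\psi_y(t_*)\le C$ and therefore $\gamma(y)\ge c\rho(1+R/\rho)^{-k_0}$. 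For the upper bound on $\gamma(y)$ the obstacle is that the target radius $T=C\rho(1+R/\rho)^{k_0/(k_0+1)}$ satisfies $T\ll R$, so $B(y,T)$ cannot contain any ball about $x$; the remedy is to transfer mass from a large concentric ball. Using $B(x,2R)\subset B(y,3R)$ and the sub-scaling lower bound $\psi_x(2R)\ge c(R/\rho)^{\beta}$ gives $\psi_y(3R)\ge c(R/\rho)^{\beta}$, and the super-scaling bound at the centre $y$ then brings this down to radius $T$, namely $\psi_y(T)\ge c(T/R)^{k_1}(R/\rho)^{\beta}$. Requiring the right-hand side to exceed $1$ forces $T\gtrsim\rho(R/\rho)^{\,1-\beta/k_1}$, and $1-\beta/k_1=k_0/(k_0+1)$ by the choice of $k_0$, so $\gamma(y)\le C\rho(1+R/\rho)^{k_0/(k_0+1)}$.

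The main obstacle is precisely this last step: unlike the lower bound, the upper bound on $\gamma(y)$ asks one to show that $V$ carries substantial mass in the comparatively small ball $B(y,T)$, information that cannot be obtained from $x$ by inclusion and must instead be routed through the doubling (super-scaling) estimate applied at $y$ after the mass of $B(x,2R)$ has been captured by the large ball $B(y,3R)$. Matching the two resulting exponents is what pins down the relation $k_0+1=k_1/\beta$ and explains the asymmetric powers $-k_0$ and $k_0/(k_0+1)$ appearing in the statement.
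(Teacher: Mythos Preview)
The paper does not prove this lemma; it is simply quoted from Shen \cite{S} with no argument supplied. Your outline is essentially a reconstruction of Shen's original proof (Lemma~1.4 in \cite{S}), and the strategy is correct: everything is driven by the two scaling inequalities for $\psi_z(r)=r^{2-n}\int_{B(z,r)}V$, the sub-scaling bound $\psi_z(r)\le C(r/R)^{\beta}\psi_z(R)$ coming from H\"older plus the $RH_{q_0}$ inequality, and the super-scaling bound $\psi_z(R)\le C(R/r)^{k_1}\psi_z(r)$ coming from the doubling of $V\,dx$. Your identification of the exponent relation $k_0+1=k_1/\beta$ as the source of the asymmetric powers $-k_0$ and $k_0/(k_0+1)$ is exactly the mechanism in Shen's argument.

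Two minor points to tighten. First, the hypothesis is $V\in RH_{n/2}$, but you use $\beta=2-n/q_0>0$; you should say explicitly that Gehring's self-improvement gives $V\in RH_{q_0}$ for some $q_0>n/2$, which is what makes $\beta$ strictly positive. Second, in the upper-bound step your application of the super-scaling inequality at $y$ from radius $3R$ down to $T=C'\rho(R/\rho)^{k_0/(k_0+1)}$ tacitly requires $T\le 3R$; this holds only once $R/\rho$ exceeds a constant depending on $C'$, so the residual range $\rho\le R\le M\rho$ has to be absorbed into the nearby case (iterating the comparability $\gamma(y)\sim\gamma(x)$ along a chain of balls of radius $\sim\gamma(x)$). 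With those two remarks addressed, your argument is complete and matches the standard proof.
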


\begin{lemma}[Lemma 2.7 in \cite{CLY}]\label{V}
Let $V\in RH_{q_0}(\RN)$ and $\delta=2-n/q_0$, where $q_0\in (n/2, \infty)$ and $n\geq 5$.
Then there exists a positive constant $C$ such that for all $x, y\in \RN$ and $t\in (0, \gamma^4(x)]$,
\begin{equation*}
\int_{\RN}\frac{V^2(y)}{t^{n/4}}e^{-A_4\frac{|x-y|^{4/3}}{t^{1/3}}}dy\leq Ct^{-1}\Big(\frac{t^{1/4}}{\gamma(x)}\Big)^{2\delta},
\end{equation*}
where $A_4=\min \{A, A_1\}$, and $A, A_1$ are constants, respectively, as in (\ref{B}) and (\ref{k}).
\end{lemma}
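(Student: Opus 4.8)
The plan is to substitute $r=t^{1/4}$, so that $|x-y|^{4/3}/t^{1/3}=(|x-y|/r)^{4/3}$ and the hypothesis $t\in(0,\gamma^4(x)]$ becomes $0<r\le\gamma(x)$; the quantity to be estimated is then
\begin{equation*}
I:=\frac{1}{r^{n}}\int_{\RN}V^{2}(y)\,e^{-A_4(|x-y|/r)^{4/3}}\,dy,
\end{equation*}
and the claim reads $I\le C\,r^{-4}(r/\gamma(x))^{2\delta}$. The first step is a self-improvement observation that converts the problem on $V^{2}$ into one on $V$: for every ball $B=B(x,R)$, H\"older's inequality with exponents $q_0/2$ and $(q_0/2)'$ followed by the reverse H\"older inequality for $V\in RH_{q_0}$ gives
\begin{equation*}
\int_{B}V^{2}\le\Big(\int_{B}V^{q_0}\Big)^{2/q_0}|B|^{1-2/q_0}\le C\,|B|^{-1}\Big(\int_{B}V\Big)^{2}\le C\,R^{-n}\Big(\int_{B}V\Big)^{2}.
\end{equation*}
Thus everything reduces to the classical behaviour of the averages of $V$ itself.

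For that I would invoke the scaling estimate for $V\in RH_{q_0}$: combining H\"older and the reverse H\"older inequality as above yields, for $0<\rho\le R$,
\begin{equation*}
\frac{1}{\rho^{\,n-2}}\int_{B(x,\rho)}V\le C\Big(\frac{\rho}{R}\Big)^{\delta}\frac{1}{R^{\,n-2}}\int_{B(x,R)}V,\qquad \delta=2-\tfrac{n}{q_0}.
\end{equation*}
Taking $R=\gamma(x)$ and using $\int_{B(x,\gamma(x))}V\le\gamma(x)^{\,n-2}$ from the definition (\ref{gamma}), I obtain $\int_{B(x,\rho)}V\le C\rho^{\,n-2}(\rho/\gamma(x))^{\delta}$ whenever $\rho\le\gamma(x)$, and hence by the self-improvement inequality
\begin{equation*}
\int_{B(x,\rho)}V^{2}\le C\,\rho^{\,n-4}\Big(\frac{\rho}{\gamma(x)}\Big)^{2\delta},\qquad 0<\rho\le\gamma(x).
\end{equation*}
This is the step that manufactures the exponent $2\delta$, and checking that the powers match $r^{-4}(r/\gamma(x))^{2\delta}$ is the conceptual core. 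For $R>\gamma(x)$ the averages of $V$ grow at most polynomially, $\int_{B(x,R)}V\le CR^{\,n-2}(R/\gamma(x))^{k_0}$ with $k_0$ as in Lemma \ref{veq} (a standard consequence of the properties of $\gamma$ in \cite{S}), whence $\int_{B(x,R)}V^{2}\le CR^{\,n-4}(R/\gamma(x))^{2k_0}$.

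Finally I would decompose $\RN$ into $A_0=B(x,r)$ and the dyadic annuli $A_j=\{y:2^{j-1}r<|x-y|\le 2^{j}r\}$, $j\ge1$, on which the kernel is at most $e^{-A_4 2^{(j-1)4/3}}$, so that
\begin{equation*}
I\le\frac{1}{r^{n}}\sum_{j\ge0}e^{-A_4 2^{(j-1)4/3}}\int_{B(x,2^{j}r)}V^{2}.
\end{equation*}
Since $r\le\gamma(x)$, the indices with $2^{j}r\le\gamma(x)$ are treated with the ball estimate, giving $C\,r^{-4}(r/\gamma(x))^{2\delta}\sum_{j}2^{j(n-4+2\delta)}e^{-A_4 2^{(j-1)4/3}}$, a convergent series (the super-exponential factor dominates the polynomial one) whose $j=0$ term reproduces the desired main term. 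For the remaining indices, with $2^{j}r>\gamma(x)$, I would use the polynomial growth bound; setting $N=\gamma(x)/r\ge1$, the powers of $N$ cancel and the contribution is controlled by $C\,r^{-4}N^{\,n-4}e^{-cN^{4/3}}\le C\,r^{-4}N^{-2\delta}=C\,r^{-4}(r/\gamma(x))^{2\delta}$, since $N^{\,n-4+2\delta}e^{-cN^{4/3}}$ is bounded for $N\ge1$. I expect this tail---balancing the super-exponential decay of the kernel against the polynomial growth of $\int V^{2}$ and verifying that it is genuinely dominated by the main term---to be the main technical obstacle, whereas the generation of the exponent $2\delta$ via the reverse H\"older inequality is the heart of the argument.
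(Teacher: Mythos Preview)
The paper does not prove this lemma at all; it is simply quoted as Lemma~2.7 of \cite{CLY}, so there is no in-paper proof to compare against. Your argument is correct and is in fact the standard route: the self-improvement step $\int_B V^2\le C|B|^{-1}\bigl(\int_B V\bigr)^2$ (valid here since $q_0>n/2\ge 5/2>2$), Shen's scaling estimate for $\int_{B(x,\rho)}V$ with exponent $\delta=2-n/q_0$, and a dyadic decomposition of the Gaussian-type kernel. Your handling of the tail range $2^jr>\gamma(x)$ is the only place that needs a little care, and the computation you sketch does work: after pulling out $2^{j_0(n-4+2k_0)}\sim N^{\,n-4+2k_0}$ and summing the remaining series against $e^{-cN^{4/3}2^{4m/3}}$, one indeed obtains $Cr^{-4}N^{\,n-4}e^{-cN^{4/3}}\le Cr^{-4}N^{-2\delta}$. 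This is essentially the proof given in \cite{CLY}.
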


And we can prove the following kernel estimates of $e^{-t\mathcal{L}}$.
\begin{lemma}\label{k6}
For every $N\in \mathbb{N}$, there exist positive constants $C$, $A_2$ and $A_3$ such that for all $x,y\in \RN$
and $0<t<\infty$,

\smallskip
\noindent(i)~~$\displaystyle|\mathcal{B}_t(x,y)|\leq Ct^{-\frac{n}{4}}\Big(1+\frac{\sqrt{t}}{\gamma^2(x)}+\frac{\sqrt{t}}{\gamma^2(y)}\Big)^{-N}
e^{-A_2\frac{|x-y|^{4/3}}{t^{1/3}}}$,

\smallskip
\noindent(ii)
$\displaystyle \Big|\frac{\partial}{\partial t}\mathcal{B}_t(x,y)\Big|\leq Ct^{-\frac{n+4}{4}}\Big(1+\frac{\sqrt{t}}{\gamma^2(x)}+\frac{\sqrt{t}}{\gamma^2(y)}\Big)^{-N}
e^{-A_3\frac{|x-y|^{4/3}}{t^{1/3}}},
$\\
where $A_2=A_1/2$, and $A_3<A_2$.
\end{lemma}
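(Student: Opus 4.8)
\textbf{Proof strategy for Lemma \ref{k6}.}
The plan is to compare the heat kernel $\mathcal{B}_t(x,y)$ of $e^{-t\mathcal{L}}$ with the biharmonic heat kernel $b(x-y,t)$ via the perturbation formula, and then exploit the smallness of the potential on scales below $\gamma(x)^4$ encoded in Lemma \ref{V}. The zeroth-order estimate $|\mathcal{B}_t(x,y)|\le Ct^{-n/4}e^{-A\frac{|x-y|^{4/3}}{t^{1/3}}}$ from \eqref{B} already gives the Gaussian-type decay with $A_2=A_1/2<A$; the real content of (i) is upgrading the constant factor $C$ to the decaying factor $\bigl(1+\sqrt t/\gamma^2(x)+\sqrt t/\gamma^2(y)\bigr)^{-N}$. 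First I would record the Duhamel (perturbation) identity
\begin{equation*}
\mathcal{B}_t(x,y)=b(x-y,t)-\int_0^t\!\!\int_{\RN} b(x-z,t-s)\,V^2(z)\,\mathcal{B}_s(z,y)\,dz\,ds,
\end{equation*}
which holds because $\mathcal{L}=(-\Delta)^2+V^2$ differs from $(-\Delta)^2$ by multiplication by $V^2$.

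Next I would prove the case $N=1$ first, and then bootstrap. Split the domain of integration in the Duhamel term according to whether $\sqrt s\le \gamma^2(y)$ or not (and symmetrically in $x$). On the region $0<s\le\gamma^2(y)^2=\gamma(y)^4$, insert the zeroth-order bound for $\mathcal{B}_s(z,y)$, use Lemma \ref{V} with $x$ replaced by $y$ to estimate $\int_{\RN} V^2(z)s^{-n/4}e^{-A_4|y-z|^{4/3}/s^{1/3}}\,dz\le Cs^{-1}(s^{1/4}/\gamma(y))^{2\delta}$, and then carry out the $s$-integral $\int_0^{\min(t,\gamma(y)^4)} s^{-1+\delta/2}\gamma(y)^{-2\delta}\,ds$, which converges at $s=0$ (since $\delta>0$) and contributes a factor $\lesssim (\sqrt t/\gamma^2(y))^{\delta}$ when $\sqrt t\ge\gamma^2(y)$; the convolution $\int_{\RN} b(x-z,t-s)(\cdots)\,dz$ against the Gaussian-type kernel \eqref{k} reproduces a kernel of the form $t^{-n/4}e^{-A_2|x-y|^{4/3}/t^{1/3}}$ after the standard splitting $|x-z|^{4/3}/(t-s)^{1/3}+|y-z|^{4/3}/s^{1/3}\gtrsim |x-y|^{4/3}/t^{1/3}$ and absorbing a bit of the exponent to handle the polynomial losses. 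On the complementary region $s>\gamma(y)^4$ one already has $\sqrt s/\gamma^2(y)>1$, so one trades the crude estimate $1\le C\sqrt s/\gamma^2(y)$ into the integrand and iterates/uses Lemma \ref{veq} to pass between $\gamma(x)$ and $\gamma(y)$; the symmetric treatment in the $x$ variable then yields the full factor $(1+\sqrt t/\gamma^2(x)+\sqrt t/\gamma^2(y))^{-1}$. Having $N=1$, one obtains arbitrary $N$ by feeding the $N$ estimate for $\mathcal B_s(z,y)$ back into the Duhamel formula and repeating, since each iteration gains one more power of $(\cdots)^{-1}$ (a standard induction).

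For part (ii) I would differentiate the Duhamel identity in $t$. The derivative $\partial_t b(x-y,t)$ obeys \eqref{kt}, i.e. it gains a full factor $t^{-1}$ relative to \eqref{k} (up to a harmless polynomial in $|xt^{-1/4}|$ which is absorbed by shrinking the exponent), so the first term of $\partial_t\mathcal B_t$ is controlled by $Ct^{-(n+4)/4}(\cdots)^{-N}e^{-A_3|x-y|^{4/3}/t^{1/3}}$ directly. For the integral term, $\partial_t$ either hits the upper limit $s=t$, producing $b(0,\cdot)\cdot V^2\cdot\mathcal B_t$ which is estimated using part (i) and Lemma \ref{V}, or it hits $b(x-z,t-s)$ under the integral, producing $\partial_t b(x-z,t-s)$, to which the same splitting-and-Lemma \ref{V} argument as in (i) applies, now with the extra $t^{-1}$ gain from \eqref{kt}. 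Collecting terms gives the exponent $A_3<A_2$ as stated. The main obstacle, as usual in this circle of ideas, is the bookkeeping in the convolution step: one must juggle the two competing Gaussian-type exponentials against the polynomial factors $(1+|\cdot t^{-1/4}|)^{-\#}$ and the short-time singularities $s^{-1+\delta/2}$ coming from Lemma \ref{V}, making sure the $s$-integral converges at $0$ and that enough exponential decay survives to produce $A_2=A_1/2$ and $A_3<A_2$; the key enabling fact that makes it work is precisely that $\delta=2-n/q_0>0$ under the hypothesis $q_0>n/2$.
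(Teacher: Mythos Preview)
Your outline for part (i) is essentially the perturbation-and-bootstrap argument carried out in \cite{CLY}, which the paper simply cites (``For (i), see Theorem 2.5 of \cite{CLY}''); so on that front you are reproducing the proof behind the citation, and the strategy is sound.

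For part (ii), however, the paper takes a quite different and much shorter route: it extends $\{e^{-t\mathcal L}\}$ to a holomorphic semigroup $\{T_\xi\}_{\xi\in\Delta_{\pi/4}}$, observes that the kernel bound (i) persists for complex $\xi$ with $\sqrt t$ replaced by $\sqrt{\mathrm{Re}\,\xi}$, and then reads off the time-derivative estimate from the Cauchy integral formula
\[
\partial_t\mathcal B_t(x,y)=\frac{1}{2\pi i}\int_{|\xi-t|=t/10}\frac{\mathcal B_\xi(x,y)}{(\xi-t)^2}\,d\xi.
\]
This avoids differentiating Duhamel entirely.

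Your proposed differentiation of Duhamel has a genuine gap at the boundary term. When $\partial_t$ hits the upper limit $s=t$ in
$\int_0^t\!\!\int b(x-z,t-s)V^2(z)\mathcal B_s(z,y)\,dz\,ds$, one obtains
\[
\int_{\RN} b(x-z,0)\,V^2(z)\,\mathcal B_t(z,y)\,dz \;=\; V^2(x)\,\mathcal B_t(x,y),
\]
since $b(\cdot,0)=\delta$. This is a \emph{pointwise} product with $V^2(x)$, and Lemma~\ref{V} is an \emph{integral} estimate; it gives no control on $V^2$ at a single point. For $V\in RH_{q_0}$ with $q_0<\infty$ there is no bound of the form $V^2(x)\lesssim \gamma(x)^{-4}$, so the boundary term cannot be estimated as you suggest. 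One can try to dodge this by splitting the Duhamel integral at $t/2$ (as the paper itself does later in the proof of Theorem~\ref{thonp}), but then the piece $\int_{t/2}^t e^{-(t-s)\Delta^2}V^2\partial_s e^{-s\mathcal L}\,ds$ already contains $\partial_s\mathcal B_s$, and you would need an a priori bound on it to close the argument---which is precisely what the analytic-semigroup/Cauchy-formula method supplies in one stroke.
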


\begin{proof}
For $(i)$, see Theorem 2.5 of \cite{CLY}.

Now we give the proof of $(ii)$. As $\mathcal{L}=(-\Delta)^2+V^2$ is a nonnegative self-adjoint operator, we can extend the semigroup $\displaystyle \{e^{-t\mathcal{L}}\}$ to a holomorphic semigroup $\{T_\xi\}_{\xi\in \Delta_{\pi/4}}$ uniquely. The kernel $\mathcal{B}_\xi (x,y)$ of $T_\xi$ satisfies
\begin{equation}\label{eq:kerHolo}
|\mathcal{B}_\xi(x,y)|\le C_N (R\xi)^{-n/4}\Big(1+\frac{\sqrt{R\xi}}{\gamma^2(x)}+\frac{\sqrt{R\xi}}{\gamma^2(y)}\Big)^{-N}e^{-C\frac{|x-y|^{4/3}}{(R\xi)^{1/3}}}.
\end{equation}
The Cauchy integral formula combined with \eqref{eq:kerHolo} gives
\begin{equation*}
\Big|\frac{\partial}{\partial t} \mathcal{B}_t(x,y)\Big|=\Big|\frac{1}{2\pi}\int_{\abs{\xi-t}=t/10}\frac{\mathcal{B}_\xi(x,y)}{{(\xi-t)}^2}d\xi\Big|\le \frac{C_N}{t^{n/4+1}}\Big(1+\frac{\sqrt t}{\gamma^2(x)}+\frac{\sqrt t}{\gamma^2(y)}\Big)^{-N}e^{-C\frac{{x-y}^{4/3}}{t^{1/3}}}.
\end{equation*}
Then, we complete the proof.
\end{proof}

With the estimates above, we can give the proof of Theorem \ref{thonp}.

\begin{proof}[ Proof of Theorem \ref{thonp}]
For $f\in L^p(\RN), 1\leq p<\infty$, we consider the following local operators
\begin{equation*}
e_{loc}^{-t\mathcal{L}}f(x)=\int_{|x-y|<\gamma(x)}\mathcal{B}_t(x,y)f(y)dy,~~~~~~x\in \RN,
\end{equation*}
and
\begin{equation*}
e_{loc}^{-t\Delta^2}f(x)=\int_{|x-y|<\gamma(x)}b_t(x-y)f(y)dy,~~~~~~x\in \RN.
\end{equation*}
Then, we have
\begin{equation*}
\mathcal{V}_\rho(e^{-t\mathcal{L}})(f)
\leq \mathcal{V}_\rho(e_{loc}^{-t\mathcal{L}}-e_{loc}^{-t\Delta^2})(f)+\mathcal{V}_\rho(e_{loc}^{-t\Delta^2})(f)
+\mathcal{V}_\rho(e^{-t\mathcal{L}}-e_{loc}^{-t\mathcal{L}})(f)\\
=: J_1+J_2+J_3.
\end{equation*}
Let us analyze term $J_2$ first.
\begin{align*}
J_2&\leq  \Big(\sum_{j=0}^\infty \Big|e^{-t_{j}\Delta^2}(f)(x)- e^{-t_{j+1}\Delta^2}(f)(x)\Big|^\rho\Big)^{\frac{1}{\rho}}\\
&\qquad+\Big(\sum_{j=0}^\infty \Big|\int_{|x-y|>\gamma(x)}b(x-y,t_j)-b(x-y,t_{j+1})dy\Big|^\rho\Big)^{\frac{1}{\rho}}\\
&\leq  \mathcal{V}_\rho(e^{-t\Delta^2})(f)(x)+\sup_{\varepsilon>0}\Big\|\int_{|x-y|>\varepsilon}b(x-y,t)f(y)\Big\|_{E_\rho}.
\end{align*}
We consider the operator defined by
\begin{align*}
T: L^2(\RN)&\rightarrow L^2_{E_\rho}(\RN)\\
f&\rightarrow  Tf(x)=\int_{\RN}b(x-y,t)f(y)dy,
\end{align*}
which is bounded from $L^2(\RN)$ into $L^2_{E_\rho}(\RN)$ according to Theorem \ref{E}.
Moreover, $T$ is a Calder\'{o}n-Zygmund operator with the $E_\rho$-valued kernel $b(x-y,t)$. In fact, the kernel $b(x-y,t)$ has the following two properties:
\begin{enumerate}[(1)]
\item By (\ref{kt}), we have
\begin{align*}
\quad \|b(x-y,\cdot)\|_{E_\rho}&\leq \sup_{t_j\searrow 0}\sum_{j=0}^\infty\int^{t_j}_{t_{j+1}}\left|\frac{\partial}{\partial t}b(x-y,t)\right|dt\\
 &\leq C\int_0^\infty\Big|\frac{\partial}{\partial t}b(x-y,t)\Big|dt\\
&\leq C\int_0^{|x-y|^4}\Big|\frac{\partial}{\partial t}b(x-y,t)\Big|dt+ C\int_{|x-y|^4}^\infty\Big|\frac{\partial}{\partial t}b(x-y,t)\Big|dt\\
&\leq C\int_0^{|x-y|^4}t^{-\frac{n}{4}-1}\left(\frac{t^{1/3}}{|x-y|^{4/3}}\right)^{\frac{3}{4}(n+4)}dt+\int_{|x-y|^4}^\infty t^{-\frac{n}{4}-1}dt\\
&\leq C|x-y|^{-n},\quad x,y\in\RN, t>0.
\end{align*}
\item Proceeding a similar way together with (\ref{bxt}), we have
\begin{align*}
\quad \Big\|\frac{\partial}{\partial x}b(x-y,\cdot)\Big\|_{E_\rho}+\Big\|\frac{\partial}{\partial y}b(x-y,\cdot)\Big\|_{E_\rho}\leq C|x-y|^{-n-1}, \quad x,y\in\RN, t>0.
\end{align*}
\end{enumerate}
Thus,  by proceeding as in the proof of  \cite[Proposition 2, p. 34 and Corollary 2, p. 36,]{Stein},  we can prove that the maximal operator $T^*$ defined by
$$T^*=\sup_{\varepsilon>0}\Big\|\int_{|x-y|>\varepsilon}b(x-y,t)f(y)\Big\|_{E_\rho}$$
is bounded on $L^p(\RN)$  for every $1<p<\infty$. Combining Theorem \ref{E}, we conclude that $\mathcal{V}_\rho(e_{loc}^{-t\Delta^2})$
 is bounded from $L^p(\RN)$ into itself for every $1<p<\infty$.

Next, we consider term $J_3$.
\begin{align*}
J_3&=\sup_{t_j\searrow 0}\left(\sum_{j=0}^\infty \Big|\int_{|x-y|>\gamma(x)}(\mathcal{B}_{t_j}(x,y)-\mathcal{B}_{t_{j+1}}(x,y))f(y)dy\Big|^\rho\right)^{\frac{1}{\rho}}\\
&\leq \sup_{t_j\searrow 0}\sum_{j=0}^\infty\int_{|x-y|>\gamma(x)}|f(y)|\int^{t_j}_{t_{j+1}}\left|\frac{\partial}{\partial t}\mathcal{B}_t(x,y)\right|dtdy\\
&\leq \int_{|x-y|>\gamma(x)}|f(y)|\int_0^\infty\Big|\frac{\partial}{\partial t}\mathcal{B}_t(x,y)\Big|dtdy\\
&\leq \int_{|x-y|>\gamma(x)}|f(y)|\left(\int_{\gamma^4(x)}^\infty\Big|\frac{\partial}{\partial
t}\mathcal{B}_t(x,y)\Big|dt+\int_0^{\gamma^4(4)}\Big|\frac{\partial}{\partial t}\mathcal{B}_t(x,y)\Big| dt\right)dy\\
&:= J_{31}+J_{32}.
\end{align*}
To estimate $J_{31}$, by Lemma \ref{k6} with $N=n+2$ and changing variables, we have
\begin{align*}
J_{31}&\leq C\int_{|x-y|>\gamma(x)}|f(y)|\int_{\gamma^4(x)}^\infty t^{-\frac{n+4}{4}}\left(1+\frac{\sqrt{t}}{\gamma^2(x)}\right)^{-n-2}
e^{-A_3\frac{|x-y|^{4/3}}{t^{1/3}}}dtdy\\
&\leq C\int_{|x-y|>\gamma(x)}|f(y)|\int_1^\infty \frac{1}{\gamma^n(x)}\frac{1}{u^{n/2+1}}\frac{1}{(1+u)^{n+2}}e^{-A_3\frac{|x-y|^{4/3}}{u^{2/3}\gamma^{4/3}(x)}}dudy\\
&\leq C\frac{1}{\gamma^n(x)}\int_{|x-y|>\gamma(x)}|f(y)|\int_1^\infty \frac{1}{u^{n/2+1}}\frac{1}{(1+u)^{n+2}}\left(\frac{u^{2/3}\gamma^{4/3}(x)}{|x-y|^{4/3}}\right)^{\frac{3(n+2)}{4}}dudy\\
&\leq C\frac{1}{\gamma^n(x)}\int_{|x-y|>\gamma(x)}|f(y)|\left(\frac{\gamma(x)}{|x-y|}\right)^{n+2}dy\\
&\leq C\frac{1}{\gamma^n(x)}\sum_{k=0}^\infty\int_{2^k\gamma(x)<|x-y|\leq 2^{k+1}\gamma(x)}|f(y)|\left(\frac{\gamma(x)}{|x-y|}\right)^{n+2}dy\\
&\leq C\sum_{k=0}^\infty\frac{1}{2^{2k}(2^k\gamma(x))^n}\int_{|x-y|\leq 2^{k+1}\gamma(x)}|f(y)|dy\leq CM(f)(x),
\end{align*}
where $M(f)$ is the Hardy-Littlewood maximal function of $f$.
For $J_{32}$, by Lemma \ref{k6} we have
\begin{align*}
J_{32}&\leq C\int_{|x-y|>\gamma(x)}|f(y)|\int_0^{\gamma^4(x)} t^{-\frac{n+4}{4}}e^{-A_3\frac{|x-y|^{4/3}}{t^{1/3}}}dtdy\\
&\leq C\int_0^{\gamma^4(x)}\frac{e^{-c\frac{\gamma^{4/3}(x)}{t^{1/3}}}}{t}\int_{\RN}t^{-\frac{n}{4}}e^{-C\frac{|x-y|^{4/3}}{t^{1/3}}}|f(y)|dy dt\\
&\leq C\sup_{t>0}t^{-\frac{n}{4}}\int_{\RN}e^{-C\frac{|x-y|^{4/3}}{t^{1/3}}}|f(y)|dtdy\leq CM(f)(x).
\end{align*}
Thus from the estimates $J_{31}$ and $J_{32}$, we have $J_3\leq C M(f)(x)$,
 which implies that the operator $\mathcal{V}_\rho(e^{-t\mathcal{L}}-e_{loc}^{-t\mathcal{L}})(f)$ is bounded
 from $L^p(\RN)$ into itself for every $1<p<\infty$.

Finally, we consider the term $J_1$.
\begin{align*}
&J_1\\&=\sup_{t_j\searrow 0}\left(\sum_{j=0}^\infty \Big|\int_{|x-y|<\gamma(x)}\left((\mathcal{B}_{t_j}(x,y)-b(x-y,t_j))-(\mathcal{B}_{t_{j+1}}(x,y)-b(x-y,t_{j+1}))\right)f(y)dy\Big|^\rho\right)^{\frac{1}{\rho}}\\
&\leq \sup_{t_j\searrow 0}\sum_{j=0}^\infty\int_{|x-y|<\gamma(x)}|f(y)|\int^{t_j}_{t_{j+1}}\Big|\frac{\partial}{\partial t}\big(\mathcal{B}_t(x,y)-b(x-y,t)\big)\Big|dtdy\\
&\leq \int_{|x-y|<\gamma(x)}|f(y)|\int^{\infty}_{0}\Big|\frac{\partial}{\partial t}\big(\mathcal{B}_t(x,y)-b(x-y,t)\big)\Big|dtdy\\
&\leq \int_{|x-y|<\gamma(x)}|f(y)|\int^{\infty}_{\gamma^4(x)}\Big|\frac{\partial}{\partial t}\big(\mathcal{B}_t(x,y)-b(x-y,t)\big)\Big|dtdy\\
&\quad +\int_{|x-y|<\gamma(x)}|f(y)|\int_0^{\gamma^4(x)}\Big|\frac{\partial}{\partial t}\big(\mathcal{B}_t(x,y)-b(x-y,t)\big)\Big|dtdy\\
&=:J_{11}+J_{12},\quad \quad ~x\in \RN.
\end{align*}
Applying Lemma \ref{K} and Lemma \ref{k6}, we have
\begin{align*}
J_{11}
&\leq C\int_{|x-y|<\gamma(x)}|f(y)|\int^{\infty}_{\gamma^4(x)} t^{-\frac{n+4}{4}}e^{-C\frac{|x-y|^{4/3}}{t^{1/3}}}dtdy\\
&\leq C\int_{|x-y|<\gamma(x)}|f(y)|\int^{\infty}_{\gamma^4(x)} t^{-\frac{n+4}{4}}dtdy\\
&\leq C\frac{1}{\gamma(x)^n}\int_{|x-y|<\gamma(x)}|f(y)|dy\leq CM(f)(x).
\end{align*}
The  formula (2.7) in \cite{CLY}  implies
\begin{align*}
&\frac{d}{dt}\Big(e^{-t\mathcal{L}}-e^{-t\Delta^2}\Big)\\
&=-e^{-\frac{t}{2}\Delta^2}V^2e^{-\frac{t}{2}\mathcal{L}}
-\int_0^{{\frac{t}{2}}}\frac{d}{dt}e^{-(t-s)\Delta^2}V^2e^{-s\mathcal{L}}ds-\int_{{\frac{t}{2}}}^{t}e^{-(t-s)\Delta^2}V^2\frac{d}{ds}e^{-s\mathcal{L}}ds.
\end{align*}
Then we have
\begin{align*}
&\frac{\partial}{\partial t}\big(\mathcal{B}_t(x,y)-b(x-y,t)\big)\\
&=-\int_{\RN}V^2(z)b(x-z,{t}/{2})\mathcal{B}_{\frac{t}{2}}(z,y)dz-\int_0^{\frac{t}{2}}\int_{\RN}V^2(z)\frac{\partial}{\partial t}b(x-z,t-s)\mathcal{B}_s(z,y)dzds\\
&\quad -\int^t_{\frac{t}{2}}\int_{\RN}V^2(z)b(x-z,t-s)\frac{\partial}{\partial s}\mathcal{B}_s(z,y)dzds\\
&=K_1(x,y,t)+K_2(x,y,t)+K_3(x,y,t),\qquad x,y\in \RN \text{and}~~ t>0.
\end{align*}
We rewrite $J_{12}$ as
\begin{align*}
J_{12}=\sum_{k=1}^3\int_{|x-y|<\gamma(x)}|f(y)|\int_0^{\gamma^4(x)}|K_m(x,y,t)|dtdy=\sum_{k=1}^3T_mf(x).
\end{align*}
Using (\ref{k}), Lemmas \ref{k6} and \ref{V}, we obtain
\begin{align*}
\int_0^{\gamma^4(x)}|K_1(x,y,t)|dt
&\leq C \int_0^{\gamma^4(x)}\int_{\RN}V^2(z)t^{-\frac{n}{4}}e^{-A_1\frac{|x-z|^{4/3}}{t^{1/3}}}t^{-\frac{n}{4}}e^{-A_2\frac{|y-z|^{4/3}}{t^{1/3}}}dzdt\\
&\leq C \int_0^{\gamma^4(x)}t^{-\frac{n}{4}}e^{-A_2\frac{|x-y|^{4/3}}{t^{1/3}}}\int_{\RN}t^{-\frac{n}{4}}e^{-A_2\frac{|x-z|^{4/3}}{t^{1/3}}}V^2(z)dzdt\\
&\leq C \int_0^{\gamma^4(x)}t^{-\frac{n}{4}-1}e^{-A_2\frac{|x-y|^{4/3}}{t^{1/3}}}\left(\frac{t^{1/4}}{\gamma(x)}\right)^{2\delta}dt.
\end{align*}
As a consequence,
\begin{align*}
|T_1(f)(x)|&\leq  C\int_{|x-y|<\gamma(x)}|f(y)|\int_0^{\gamma^4(x)}t^{-\frac{n}{4}-1}e^{-A_2\frac{|x-y|^{4/3}}{t^{1/3}}}\left(\frac{t^{1/4}}{\gamma(x)}\right)^{2\delta}dtdy\\
&\leq  C\int_0^{\gamma^4(x)}\frac{t^{-1+\delta/2}}{\gamma(x)^{2\delta}}\frac{1}{t^{\frac{n}{4}}}\int_{\RN}|f(y)|e^{-A_2\frac{|x-y|^{4/3}}{t^{1/3}}}dy dt\\
&\leq C\sup_{t>0}\ \frac{1}{t^{\frac{n}{4}}}\int_{\RN}|f(y)|e^{-A_2\frac{|x-y|^{4/3}}{t^{1/3}}}dy\leq CM(f)(x).
\end{align*}
Next, we note that, when $0<s<t/2$,  $t-s\sim t$. And by (\ref{kt}), Lemmas \ref{k6} and \ref{V}, we have
\begin{align*}
&\int_0^{\gamma^4(x)}|K_2(x,y,t)|dt\\
&\leq C \int_0^{\gamma^4(x)}\int_0^{t/2}\int_{\RN}V^2(z)\frac{1}{(t-s)^{\frac{n}{4}+1}}e^{-A_1\frac{|x-z|^{4/3}}{(t-s)^{1/3}}}\frac{1}
{s^{\frac{n}{4}}}e^{-A_2\frac{|y-z|^{4/3}}{s^{1/3}}}dzdsdt\\
&\leq C \int_0^{\gamma^4(x)}\frac{1}{t^{\frac{n}{4}+1}}\int_0^{t/2}\int_{\RN}V^2(z)
e^{-\frac{A_1}{4}\frac{|x-z|^{4/3}}{t^{1/3}}}\frac{1}{s^{\frac{n}{4}}}e^{-\frac{A_1}{2}\frac{|y-z|^{4/3}}{s^{1/3}}}dzdsdt\\
&\leq C \int_0^{\gamma^4(x)}\frac{1}{t^{\frac{n}{4}+1}}\int_0^{t/2}\int_{\RN}e^{-\frac{A_1}{4}\frac{|x-z|^{4/3}+|y-z|^{4/3}}{t^{1/3}}}
\frac{1}{s^{\frac{n}{4}}}V^2(z)e^{-\frac{A_1}{4}\frac{|y-z|^{4/3}}{s^{1/3}}}dzdsdt\\
&\leq C \int_0^{\gamma^4(x)}\frac{1}{t^{\frac{n}{4}+1}}\int_0^{\frac{4^{1/3}t}{2}}e^{-\frac{A_1}{4}\frac{|x-y|^{4/3}}{t^{1/3}}}\int_{\RN}
\frac{1}{s^{\frac{n}{4}}}V^2(z)e^{-A_1\frac{|y-z|^{4/3}}{s^{1/3}}}dzdsdt\\
&\leq C \int_0^{\gamma^4(x)}\frac{1}{t^{\frac{n}{4}+1}}e^{-\frac{A_1}{4}\frac{|x-y|^{4/3}}{t^{1/3}}}\int_0^{\frac{4^{1/3}t}{2}}
\frac{s^{-1+\delta/2}}{\gamma(y)^{2\delta}}dsdt\\
&\leq C \frac{1}{\gamma(x)^{2\delta}}\int_0^{\gamma^4(x)}\frac{1}{t^{\frac{n}{4}+1-\frac{\delta}{2}}}e^{-\frac{A_1}{4}\frac{|x-y|^{4/3}}{t^{1/3}}}dt.
\end{align*}
Hence,
\begin{align*}
|T_2(f)(x)|&\leq C\frac{1}{\gamma(x)^{2\delta}}\int_{|x-y|<\gamma(x)}|f(y)|\int_0^{\gamma^4(x)}
\frac{1}{t^{\frac{n}{4}+1-\frac{\delta}{2}}}e^{-\frac{A_1}{4}\frac{|x-y|^{4/3}}{t^{1/3}}}dtdy\\
&\leq C\frac{1}{\gamma(x)^{2\delta}}\int_0^{\gamma^4(x)}\frac{1}{t^{1-\frac{\delta}{2}}}\int_{\RN}
\frac{1}{t^{\frac{n}{4}}}e^{-\frac{A_1}{4}\frac{|x-y|^{4/3}}{t^{1/3}}}dtdy\\
&\leq C\sup_{t>0}\frac{1}{t^{\frac{n}{4}}}\int_{\RN}|f(y)|e^{-\frac{A_1}{4}\frac{|x-y|^{4/3}}{t^{1/3}}}dy\leq CM(f)(x).
\end{align*}
As in the previous proof, proceeding a similar computation, we can also obtain
\begin{align*}
|T_3(f)(x)|\leq CM(f)(x).
\end{align*}
Owing to above estimates, we know $J_{12}\leq CM(f)(x)$. Consequently, we have $J_1\leq CM(f)(x)$.
And since $M(f)$ is bounded from $L^p(\RN)$ into itself for every $1<p<\infty$. Then the proof of Theorem \ref{thonp} is complete.
\end{proof}

\vskip3mm
\subsection{ The generalized Poisson operators  $\mathcal{P}_{t,\mathcal{L}}^\sigma$}

For $0<\sigma<1$, the generalized Poisson operators $\mathcal{P}_t^\sigma$ associated to $\mathcal{L}$ is defined as
\begin{align*}
\mathcal{P}_{t,\mathcal{L}}^\sigma f(x)&=\frac{t^{2\sigma}}{4^\sigma \Gamma(\sigma)}\int_0^\infty e^{-\frac{t^2}{4r}}e^{-t\mathcal{L}}f(x)\frac{dr}{r^{1+\sigma}}=\frac{1}{\Gamma(\sigma)}\int_0^\infty e^{-r}e^{-\frac{t^2\mathcal{L}}{4r}}f(x)\frac{dr}{r^{1-\sigma}}.
\end{align*}
We should note that, when $\sigma=1/2,$  $\mathcal{P}_t^\sigma=\mathcal{P}_t^{1/2}$ is just the Poisson semigroup.

For the variation operator associated with  the generalized Poisson operators $\{\mathcal{P}_{t,\mathcal{L}}^\sigma\}_{t>0}$, we have the following theorem.
\begin{theorem}\label{general}
Assume that $V\in RH_{q_0}(\RN)$, where $q_0\in (n/2, \infty)$ and $n\geq 5$. For $\rho>2$, there exists a
 constant $C>0$   such that

$$\|\mathcal{V}_{\rho}(\mathcal{P}_{t,\mathcal{L}}^\sigma)(f)\|_{L^p(\RN)}\leq C\|f\|_{L^p(\RN)},~~~~~~ \quad  1<p<\infty.$$

\end{theorem}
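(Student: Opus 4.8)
The plan is to derive Theorem \ref{general} directly from Theorem \ref{thonp}, using nothing more than the subordination formula that defines $\mathcal{P}_{t,\mathcal{L}}^\sigma$ together with the fact that $\|\cdot\|_{E_\rho}$ is a seminorm. Fix $x\in\RN$ and $f\in L^p(\RN)$, and regard
\[
\mathcal{P}_{t,\mathcal{L}}^\sigma f(x)=\frac{1}{\Gamma(\sigma)}\int_0^\infty e^{-r}\,e^{-\frac{t^2\mathcal{L}}{4r}}f(x)\,\frac{dr}{r^{1-\sigma}}
\]
as a function of $t\in(0,\infty)$. For an arbitrary decreasing sequence $\{t_j\}_j$ with $t_j\searrow 0$, Minkowski's integral inequality (with the $\ell^\rho$-norm in the index $j$) gives
\[
\Big(\sum_{j}\big|\mathcal{P}_{t_j,\mathcal{L}}^\sigma f(x)-\mathcal{P}_{t_{j+1},\mathcal{L}}^\sigma f(x)\big|^\rho\Big)^{1/\rho}\le\frac{1}{\Gamma(\sigma)}\int_0^\infty e^{-r}\Big(\sum_{j}\big|e^{-\frac{t_j^2\mathcal{L}}{4r}}f(x)-e^{-\frac{t_{j+1}^2\mathcal{L}}{4r}}f(x)\big|^\rho\Big)^{1/\rho}\frac{dr}{r^{1-\sigma}}.
\]

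The next step is the observation that, for each fixed $r>0$, the map $t\mapsto s=t^2/(4r)$ is an increasing bijection of $(0,\infty)$ onto itself with $s\to 0$ as $t\to 0$; hence $\{t_j\}$ decreasing to $0$ corresponds to the decreasing null sequence $\{t_j^2/(4r)\}$, and therefore the inner sum is bounded by $\mathcal{V}_\rho(e^{-t\mathcal{L}})(f)(x)$ for every $r>0$. Since $\int_0^\infty e^{-r}r^{\sigma-1}\,dr=\Gamma(\sigma)$ is finite and positive for $0<\sigma<1$, the $r$-integral collapses and, taking the supremum over all such sequences $\{t_j\}$, we obtain the pointwise domination
\[
\mathcal{V}_\rho(\mathcal{P}_{t,\mathcal{L}}^\sigma)(f)(x)\le\mathcal{V}_\rho(e^{-t\mathcal{L}})(f)(x),\qquad x\in\RN.
\]
The desired estimate $\|\mathcal{V}_\rho(\mathcal{P}_{t,\mathcal{L}}^\sigma)(f)\|_{L^p(\RN)}\le C\|f\|_{L^p(\RN)}$ for $1<p<\infty$ is then immediate from Theorem \ref{thonp}.

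Since the argument is essentially formal, the only points requiring attention are bookkeeping ones: one should check that $\mathcal{P}_{t,\mathcal{L}}^\sigma f(x)$ is given by an absolutely convergent integral for $f\in L^p$ (so that Minkowski's inequality and the interchange of $\sup$ and $\int$ are legitimate) and that the relevant integrands are jointly measurable in $(r,t)$. If one wishes to sidestep these technicalities it suffices to establish the pointwise domination first for $f\in L^2\cap L^p$, where the spectral theorem for the nonnegative self-adjoint operator $\mathcal{L}$ makes everything transparent, and then pass to general $f\in L^p(\RN)$ by a routine density argument. I do not anticipate any genuine obstacle: the entire analytic difficulty of the statement is already carried by Theorem \ref{thonp}, and this same reduction will also deliver the Morrey-space counterpart once the Morrey version of the heat-semigroup variation bound (Theorem \ref{thonm}) is in hand.
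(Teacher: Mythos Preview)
Your proposal is correct and follows essentially the same route as the paper: apply Minkowski's inequality to the subordination formula, then use the change of variable $s=t^2/(4r)$ to reduce to the heat-semigroup variation and invoke Theorem \ref{thonp}. The only cosmetic difference is that you collapse the $r$-integral first to record the pointwise domination $\mathcal{V}_\rho(\mathcal{P}_{t,\mathcal{L}}^\sigma)(f)(x)\le\mathcal{V}_\rho(e^{-t\mathcal{L}})(f)(x)$, whereas the paper takes $L^p$-norms before integrating in $r$; both lead to the same conclusion.
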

\begin{proof}
 We note that
\begin{multline*}
\mathcal{V}_\rho(\mathcal{P}_{t,\mathcal{L}}^\sigma) f(x)=\|\mathcal{P}_{t,\mathcal{L}}^\sigma f\|_{E_\rho}
=\frac{1}{\Gamma(\sigma)}\Big\|\int_0^\infty e^{-r}e^{-\frac{t^2\mathcal{L}}{4r}}f(x)\frac{dr}{r^{1-\sigma}}\Big\|_{E_\rho}\\
\leq \frac{1}{\Gamma(\sigma)}\int_0^\infty e^{-r}\big\|e^{-\frac{t^2\mathcal{L}}{4r}}f(x)\big\|_{E_\rho}\frac{dr}{r^{1-\sigma}}.
\end{multline*}
Then,  for $1<p<\infty$, by Theorem \ref{thonp} we have
\begin{multline*}
\|\mathcal{V}_\rho(\mathcal{P}_{t,\mathcal{L}}^\sigma) f\|_{L^p(\RN)}
\leq \frac{1}{\Gamma(\sigma)}\int_0^\infty e^{-r}\Big\|\big\|e^{-\frac{t^2\mathcal{L}}{4r}}f(x)\big\|_{E_\rho}\Big\|_{L^p(\RN)}\frac{dr}{r^{1-\sigma}}\\
\leq \frac{C}{\Gamma(\sigma)}\int_0^\infty e^{-r}\|f\|_{L^p(\RN)}\frac{dr}{r^{1-\sigma}}\leq C\|f\|_{L^p(\RN)}.
\end{multline*}
\end{proof}

\vskip3mm
\section{\bf  Variation inequalities in Morrey spaces}\label{Sec:3}
In this section, we will give the proof of Theorem \ref{thonm}.
For convenience, we first recall the the definition of classical Morrey spaces $L^{p,\lambda}(\RN)$, which were introduced by Morrey \cite{M} in 1938.
\begin{definition}
Let $1\leq p<\infty$, $0\leq \lambda<n$. For $f\in L^p_{loc}(\RN)$, we say $f\in L^{p,\lambda}(\RN)$ provided that
\begin{equation*}
\|f\|^p_{L^{p,\lambda}(\RN)}=\sup_{B(x_0,r)\subset\RN}r^{-\lambda}\int_{B(x_0,r)}|f(x)|^pdx<\infty,
\end{equation*}
where $B(x_0,r)$ denotes a ball centered at $x_0$ and with radius $r$.
\end{definition}
In fact, when $\alpha=0$ or $V=0$ and $0<\lambda<n$, the spaces $L^{p,\lambda}_{\alpha,V}(\RN)$ which was defined in Definition \ref{Morrey} are the  classical Morrey spaces $L^{p,\lambda}(\RN)$.

We first establish the $L^{p,\lambda}(\RN)$-boundedness of the variation operators related to $\{e^{-t\Delta^2}\}_{t>0}$ as follows.
\begin{theorem}\label{VB}
Let $\rho>2$ and $0<\lambda<n$.
 If $1<p<\infty$, then $$\|\mathcal{V}_{\rho}(e^{-t\Delta^2})(f)\|_{L^{p,\lambda}(\RN)}\leq C\|f\|_{L^{p,\lambda}(\RN)}.$$
\end{theorem}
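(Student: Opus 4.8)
The plan is to reduce the $L^{p,\lambda}$-boundedness to the already-established $L^p$-boundedness (Theorem \ref{E}) by means of a standard decomposition of the function according to a fixed ball. Fix a ball $B=B(x_0,r)\subset\RN$ and split $f=f_1+f_2$, where $f_1=f\chi_{2B}$ and $f_2=f\chi_{(2B)^c}$. For $f_1$, since $\mathcal{V}_\rho(e^{-t\Delta^2})$ is bounded on $L^p(\RN)$ by Theorem \ref{E}, one has $\int_B|\mathcal{V}_\rho(e^{-t\Delta^2})f_1|^p\,dx\le C\int_{2B}|f|^p\,dx\le C\,(2r)^\lambda\|f\|_{L^{p,\lambda}}^p$, which gives the right bound after dividing by $r^\lambda$. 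The genuinely local part is thus handled; the work is in the tail term $f_2$.

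For $f_2$, I would estimate $\mathcal{V}_\rho(e^{-t\Delta^2})f_2(x)$ pointwise for $x\in B$ using the kernel bounds from Lemma \ref{K}. Writing $\mathcal{V}_\rho(e^{-t\Delta^2})f_2(x)=\|e^{-t\Delta^2}f_2(x)\|_{E_\rho}$ and bounding the $E_\rho$-seminorm by the $L^1$-norm in $t$ of $\partial_t b$, exactly as in the estimate of the kernel bound for $T$ in the proof of Theorem \ref{thonp}, one gets
\begin{equation*}
\mathcal{V}_\rho(e^{-t\Delta^2})f_2(x)\le C\int_{|x-y|>r}\|b(x-y,\cdot)\|_{E_\rho}|f(y)|\,dy\le C\int_{|x-y|>r}\frac{|f(y)|}{|x-y|^n}\,dy,
\end{equation*}
since for $x\in B$, $y\in(2B)^c$ we have $|x-y|\gtrsim r$ and $|x-y|\sim|x_0-y|$. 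Then I decompose the region $\{|x-y|>r\}$ into dyadic annuli $2^k r<|x-y|\le 2^{k+1}r$, $k\ge0$, and on each annulus apply H\"older's inequality with exponents $p$ and $p'$ together with the Morrey condition $\int_{|x_0-y|\le 2^{k+1}r}|f|^p\le (2^{k+1}r)^\lambda\|f\|_{L^{p,\lambda}}^p$. Because $\lambda<n$, the resulting geometric series in $k$ with ratio $2^{k(\lambda-n)/p}$ converges, yielding the pointwise bound $\mathcal{V}_\rho(e^{-t\Delta^2})f_2(x)\le C r^{(\lambda-n)/p}\|f\|_{L^{p,\lambda}}$ for all $x\in B$. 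Integrating the $p$-th power over $B$ contributes $r^n\cdot r^{\lambda-n}\|f\|_{L^{p,\lambda}}^p=r^\lambda\|f\|_{L^{p,\lambda}}^p$, which is again the desired estimate after dividing by $r^\lambda$ and taking the supremum over all balls.

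Combining the two contributions gives $\sup_{B(x_0,r)}r^{-\lambda}\int_{B(x_0,r)}|\mathcal{V}_\rho(e^{-t\Delta^2})f|^p\,dx\le C\|f\|_{L^{p,\lambda}}^p$, which is Theorem \ref{VB}. The main obstacle, such as it is, is bookkeeping: one must be careful that the $E_\rho$-seminorm of the kernel is dominated by $|x-y|^{-n}$ uniformly (this is precisely property (1) established in the proof of Theorem \ref{thonp}, so it may simply be quoted), and that the geometric series in the annular decomposition converges, which is exactly where the hypothesis $0<\lambda<n$ is used. No new kernel estimate beyond Lemma \ref{K} is needed, and the argument is otherwise a routine adaptation of the classical Morrey-space boundedness scheme for Calder\'on–Zygmund type operators.
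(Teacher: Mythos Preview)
Your proposal is correct and follows essentially the same approach as the paper: fix a ball, split $f$ into a local part on $2B$ handled by Theorem \ref{E}, and a far part controlled via the kernel bound $\|b(x-y,\cdot)\|_{E_\rho}\le C|x-y|^{-n}$ together with a dyadic annular decomposition and H\"older's inequality. The only cosmetic difference is that the paper decomposes $f$ itself into $f_0=f\chi_{2B}$ plus annular pieces $f_i=f\chi_{B(x_0,2^{i+1}r)\setminus B(x_0,2^ir)}$ and applies sublinearity of $\mathcal{V}_\rho$ to the sum, whereas you keep the tail as a single piece $f_2$ and decompose the integral into annuli; both routes lead to the same geometric series in $2^{k(\lambda-n)/p}$, convergent precisely because $\lambda<n$.
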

\begin{proof}
 For any fixed $x_0\in \RN$ and $r>0$, we write
\begin{equation*}
f(x)=f_0(x)+\sum_{i=1}^\infty f_i(x),
\end{equation*}
where $f_0=f\chi_{B(x_0,2r)}$, $f_i=f\chi_{B(x_0,2^{i+1}r)\setminus B(x_0,2^{i}r)}$ for $i\geq 1$. Then
\begin{align*}
&\quad \quad \Big(\int_{B(x_0,r)}\big|\mathcal{V}_{\rho}(e^{-t\Delta^2})(f)(x)\big|^pdx\Big)^{\frac{1}{p}}\\
&\leq C\Big(\int_{B(x_0,r)}\big|\mathcal{V}_{\rho}(e^{-t\Delta^2})(f_0)(x)\big|^pdx\Big)^{\frac{1}{p}}+C\sum_{i=1}^\infty\Big(\int_{B(x_0,r)}\big|\mathcal{V}_{\rho}(e^{-t\Delta^2})(f_i)(x)\big|^pdx\Big)^{\frac{1}{p}}\\
&=:I+II.
\end{align*}
For $I$, by Theorem \ref{E}, we have
\begin{align*}
I=\int_{B(x_0,r)}\big|\mathcal{V}_{\rho}(e^{-t\Delta^2})(f_0)(x)\big|^pdx
\leq C\int_{B(x_0,2r)}|f(x)|^pdx\leq Cr^{\lambda}\|f\|^p_{L^{p,\lambda}(\RN)}.
\end{align*}
For $II$, we first analyze $\mathcal{V}_{\rho}(e^{-t\Delta^2})(f_i)(x)$. For every $i\geq 1$,
\begin{align*}
\mathcal{V}_{\rho}(e^{-t\Delta^2})(f_i)(x)&=\Big(\sum_{j=0}^\infty\Big|\int_{\RN}\big(b(x-y,t_j)-b(x-y,t_{j+1})\big)f_i(y)dy\Big|^{\rho}\Big)^{\frac{1}{\rho}\nonumber}\\
&\leq C\sum_{j=0}^\infty\int_{\RN}|f_i(y)|\int_{t_{j+1}}^{t_j}\Big|\frac{\partial }{\partial t}b(x-y,t)\Big|dtdy\nonumber\\
&\leq C\int_{B(x_0,2^{i+1}r)\setminus B(x_0,2^{i}r)}|f_i(y)|\int_{0}^{\infty}\Big|\frac{\partial }{\partial t}b(x-y,t)\Big|dtdy.
\end{align*}
Note that for $x\in B(x_0,r)$ and $y\in \RN\setminus B(x_0,2r)$, we know $|x-y|>\frac{1}{2}|x_0-y|$. By using (\ref{kt}), we have
\begin{align*}
\int_{0}^{\infty}\Big|\frac{\partial }{\partial t}b(x-y,t)\Big|dt&=\int_{0}^{|x_0-y|^4}\Big|\frac{\partial }{\partial t}b(x-y,t)\Big|dt+\int_{|x_0-y|^4}^{\infty}\Big|\frac{\partial }{\partial t}b(x-y,t)\Big|dt \nonumber\\
&\leq C\int_{0}^{|x_0-y|^4}t^{-\frac{n}{4}}e^{-A_1 (|x_0-y|t^{-\frac{1}{4}})^\frac{4}{3}}dt+C\int_{|x_0-y|^4}^{\infty}t^{-\frac{n}{4}-1}dt \nonumber\\
&\leq C|x_0-y|^{-n}\int_1^\infty u^{\frac{3n}{4}-1}e^{-A_1 u}du+C|x_0-y|^{-n}\nonumber\\
&\leq C|x_0-y|^{-n}.
\end{align*}
Thus,
\begin{align*}
\mathcal{V}_{\rho}(e^{-t\Delta^2})(f_i)(x)&\leq C\int_{B(x_0,2^{i+1}r)\setminus B(x_0,2^{i}r)}|f_i(y)||x_0-y|^{-n}dtdy\\
&\leq C\left(\int_{B(x_0,2^{i+1}r)}|f_i(y)|^p dy\right)^{\frac{1}{p}}\left(\int_{B(x_0,2^{i+1}r)\setminus B(x_0,2^{i}r)}\frac{1}{|x_0-y|^{np'}}dy^{\frac{1}{p'}}\right)\\
&\leq C(2^i r)^{-\frac{n}{p}}\left(\int_{B(x_0,2^{i+1}r)}|f_i(y)|^p dy\right)^{\frac{1}{p}}.
\end{align*}
Therefore, we have
\begin{align*}
II&\leq C\sum_{i=1}^\infty \left(2^{-in}\int_{B(x_0,2^{i+1}r)}|f_i(y)|^p dy\right)^{\frac{1}{p}}\leq C\sum_{i=1}^\infty \Big(2^{-in}r^\lambda \|f\|^p_{L^{p,\lambda}(\RN)}\Big)^{\frac{1}{p}}\leq Cr^{\frac{\lambda }{p}} \|f\|_{L^{p,\lambda}(\RN)}.
\end{align*}
Consequently,
\begin{align*}
\|\mathcal{V}_{\rho}(e^{-t\Delta^2})(f)\|_{L^{p,\lambda}(\RN)}\leq C\|f\|_{L^{p,\lambda}(\RN)}.
\end{align*}
The proof of this theorem is complete.
\end{proof}

In what follows, we devote to the proof of Theorem \ref{thonm}.

\medskip
\begin{proof}[Proof of Theorem \ref{thonm}]
Without loss of generality, we may assume that $\alpha<0$. Fixing any $x_0\in \RN$ and $r>0$, we write
\begin{equation*}
f(x)=f_0(x)+\sum_{i=1}^\infty f_i(x),
\end{equation*}
where $f_0=f\chi_{B(x_0,2r)}$, $f_i=f\chi_{B(x_0,2^{i+1}r)\setminus B(x_0,2^{i}r)}$ for $i\geq 1$. Then
\begin{align*}
&\Big(\int_{B(x_0,r)}\big|\mathcal{V}_{\rho}(e^{-t\mathcal{L}})(f)(x)\big|^pdx\Big)^{\frac{1}{p}}\\
&\leq C\Big(\int_{B(x_0,r)}\big|\mathcal{V}_{\rho}(e^{-t\mathcal{L}})(f_0)(x)\big|^pdx\Big)^{\frac{1}{p}}
+C\sum_{i=1}^\infty\Big(\int_{B(x_0,r)}\big|\mathcal{V}_{\rho}(e^{-t\mathcal{L}})(f_i)(x)\big|^pdx\Big)^{\frac{1}{p}}\\
&=:I+II.
\end{align*}
From $(i)$ of Theorem \ref{thonp}, we have
\begin{align*}
I
\leq C\int_{B(x_0,2r)}|f(x)|^pdx\leq Cr^{\lambda}\Big(1+\frac{r}{\gamma(x_0)}\Big)^{-\alpha}\|f\|^p_{L^{p,\lambda}_{\alpha,V}(\RN)}.
\end{align*}
For $II$, we first analyze $\mathcal{V}_{\rho}(e^{-t\mathcal{L}})(f_i)(x)$. For every $i\geq 1$,
\begin{align*}
\mathcal{V}_{\rho}(e^{-t\mathcal{L}})(f_i)(x)&=\Big(\sum_{j=0}^\infty\Big|\int_{\RN}\big(\mathcal{B}_{t_j}(x,y)-
\mathcal{B}_{t_{j+1}}(x,y)\big)f_i(y)dy\Big|^{\rho}\Big)^{\frac{1}{\rho}}\nonumber\\
&\leq C\sum_{j=0}^\infty\int_{\RN}|f_i(y)|\int_{t_{j+1}}^{t_j}\Big|\frac{\partial }{\partial t}\mathcal{B}_t(x,y)\Big|dtdy\nonumber\\
&\leq C\int_{B(x_0,2^{i+1}r)\setminus B(x_0,2^{i}r)}|f_i(y)|\int_{0}^{\infty}\Big|\frac{\partial }{\partial t}\mathcal{B}_t(x,y)\Big|dtdy.
\end{align*}
Note that for $x\in B(x_0,r)$ and $y\in \RN\setminus B(x_0,2r)$, we have $\displaystyle |x-y|>\frac{1}{2}|x_0-y|$. We discuss $\displaystyle \int_{0}^{\infty}|{\partial_ t}\mathcal{B}_t(x,y)|dt$ in two cases.
For the one case: $|x_0-y|\leq \gamma(x_0)$, by $(ii)$ of Lemma \ref{k6}, we have
\begin{align}
\int_{0}^{\infty}\Big|\frac{\partial}{\partial t}\mathcal{B}_t(x,y)\Big|dt&=\int_{0}^{|x_0-y|^4}\Big|\frac{\partial}{\partial t}\mathcal{B}_t(x,y)\Big|dt+\int_{|x_0-y|^4}^{\infty}\Big|\frac{\partial}{\partial t}\mathcal{B}_t(x,y)\Big|dt \nonumber\\
&\leq C\int_{0}^{|x_0-y|^4}t^{-\frac{n}{4}-1}e^{-A_1 (|x_0-y|t^{-\frac{1}{4}})^\frac{4}{3}}dt+C\int_{|x_0-y|^4}^{\infty}t^{-\frac{n}{4}-1}dt \nonumber\\
&\leq C|x_0-y|^{-n}+C\int_{0}^{|x_0-y|^4}t^{-\frac{n}{4}-1}\Big(\frac{t^{1/3}}{|x_0-y|^{4/3}}\Big)^{{3(n+4)}/{4}}dt\nonumber\\
&\leq C|x_0-y|^{-n}\Big(1+\frac{|x_0-y|}{\gamma(x_0)}\Big)^{-N}. \label{esforB}
\end{align}
For the other case: $|x_0-y|\geq \gamma(x_0)$, applying $(ii)$ of Lemma \ref{k6} together with Lemma \ref{veq}, we have
\begin{align}
\int^\infty_{|x_0-y|^4}\Big|\frac{\partial}{\partial t}\mathcal{B}_t(x,y)\Big|dt
&\leq C\int^\infty_{|x_0-y|^4}t^{-\frac{n}{4}-1}\Big(1+\frac{\sqrt{t}}{\gamma^2(y)}\Big)^{-k}e^{-A_1 \big(|x_0-y|t^{-\frac{1}{4}}\big)^{4/3}}dt \nonumber\\
&\leq C\Big(1+\frac{{|x_0-y|^2}}{\gamma^2(y)}\Big)^{-k}|x_0-y|^n\nonumber\\
&\leq C\Big(1+\frac{\big(\frac{|x_0-y|^2}{\gamma{(x_0)}}\big)^2}{c_0(1+\frac{|x_0-y|^2}{\gamma{(x_0)}})^{\frac{2k_0}{k_0+1}}}\Big)^{-k}|x_0-y|^n\nonumber\\
&\leq  C|x_0-y|^n\Big(1+\frac{|x_0-y|^2}{\gamma{(x_0)}}\Big)^{-N},\label{eqB1}
\end{align}
where we take $\displaystyle N=\left[\frac{k(k_0-1)}{k_0+1}\right]$ for any $k\in \mathbb{{N}}$.
And
\begin{align}
&\int_{0}^{|x_0-y|^4}\Big|\frac{\partial}{\partial t}\mathcal{B}_t(x,y)\Big|dt\nonumber\\
&=\int_{0}^{\gamma^4(x_o)}\Big|\frac{\partial}{\partial t}\mathcal{B}_t(x,y)\Big|dt+\int_{\gamma^4(x_0)}^{|x_0-y|^4}\Big|\frac{\partial}{\partial t}\mathcal{B}_t(x,y)\Big|dt\nonumber\\
&\leq C\int_{0}^{\gamma^4(x_0)}t^{-\frac{n}{4}-1}e^{-A_1 \big(|x_0-y|t^{-\frac{1}{4}}\big)^{4/3}}dt+C\int_{\gamma^4(x_0)}^{|x_0-y|^4}t^{-\frac{n}{4}-1}e^{-A_1 \big(|x_0-y|t^{-\frac{1}{4}}\big)^{4/3}}dt \nonumber\\
&\leq C\int^\infty_{\frac{|x_0-y|^{4/3}}{\gamma(x_0)^{4/3}}}{|x_0-y|^{-n}}u^{\frac{3n}{4}-1}e^{-A_1u}du+C\gamma(x_0)^{-n-4}
e^{-A_1\frac{|x_0-y|^{4/3}}{\gamma(x_0)^{4/3}}}|x_0-y|^4\nonumber\\
&\leq  C |x_0-y|^{-n}e^{-c\frac{|x_0-y|^{4/3}}{\gamma(x_0)^{4/3}}}+C\gamma(x_0)^{-n-4}
e^{-A_1\frac{|x_0-y|^{4/3}}{\gamma(x_0)^{4/3}}}|x_0-y|^4\nonumber\\
&\leq  C |x_0-y|^{-n}\left(1+\frac{|x_0-y|}{\gamma(x_0)}\right)^{-N}.\label{eqB2}
\end{align}
Combining (\ref{esforB}), (\ref{eqB1}) and (\ref{eqB2}), we have
\begin{align*}
&\int_{B(x_0,2^{i+1}r)\setminus B(x_0,2^{i}r)}|f_i(y)|\int_{0}^{\infty}\Big|\frac{\partial }{\partial t}\mathcal{B}_t(x,y)\Big|dy\\
&\leq C\int_{B(x_0,2^{i+1}r)\setminus B(x_0,2^{i}r)}|x_0-y|^{-n}\Big(1+\frac{|x_0-y|}{\gamma(x_0)}\Big)^{-N}|f_i(y)|dy\\
&\leq C(2^ir)^{-\frac{n}{p}}\Big(1+\frac{2^ir}{\gamma(x_0)}\Big)^{-N}\Big(\int_{B(x_0,2^{i+1}r)}|f_i(y)|^pdy\Big)^{\frac{1}{p}}.
\end{align*}
Thus, taking $N=[-\alpha]+1$, we get
\begin{align*}
\int_{B(x_0,r)}\big|\mathcal{V}_{\rho}(e^{-t\mathcal{L}})(f_i)(x)\big|^pdx
&\leq C2^{-ni}\Big(1+\frac{2^ir}{\gamma(x_0)}\Big)^{-Np}\int_{B(x_0,2^{i+1}r)}|f_i(y)|^pdy\\
&\leq C2^{(\lambda-n)i}r^\lambda\Big(1+\frac{2^ir}{\gamma(x_0)}\Big)^{-Np-\alpha}\|f\|^p_{L^{p,\lambda}_{\alpha,V}(\RN)}\\
&\leq C2^{(\lambda-n)i}r^\lambda\Big(1+\frac{r}{\gamma(x_0)}\Big)^{-\alpha}\|f\|^p_{L^{p,\lambda}_{\alpha,V}(\RN)}.
\end{align*}
Since $\lambda<n$, we have $II\leq C\|f\|_{L^{p,\lambda}_{\alpha,V}(\RN)}$. Hence,
\begin{align*}
\|\mathcal{V}_\rho(e^{-t\mathcal{L}})(f)\|_{L^{p,\lambda}_{\alpha,V}(\RN)}\leq C\|f\|_{L^{p,\lambda}_{\alpha,V}(\RN)}.
\end{align*}
The proof of the theorem is completed.
\end{proof}

Finally, we can give  the boundedness of the variation operators related to generalized Poisson operators  $\mathcal{P}_{t,\mathcal{L}}^\sigma$ in the Morrey spaces as follows.
\begin{theorem}
Let $V\in RH_{q_0}(\RN)$ for $q_0\in (n/2, \infty)$, $n\geq 5$ and $\rho>2$. Assume that $\alpha\in \mathbb{\mathbb{R}}$ and $\lambda\in (0,n)$.
There exists a constant $C>0$ such that
$$\|\mathcal{V}_{\rho}(\mathcal{P}_{t,\mathcal{L}}^\sigma)(f)\|_{L^{p,\lambda}_{\alpha, V}(\RN)}\leq C\|f\|_{L^{p,\lambda}_{\alpha, V}(\RN)}, \quad 1< p<\infty.$$
\end{theorem}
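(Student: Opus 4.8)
The plan is to deduce this statement directly from the Morrey bound for the heat‑semigroup variation already established in Theorem \ref{thonm}, by exploiting the subordination formula defining $\mathcal{P}_{t,\mathcal{L}}^\sigma$ — exactly the scheme used to derive Theorem \ref{general} from Theorem \ref{thonp}. First I would recall the identity
$$\mathcal{P}_{t,\mathcal{L}}^\sigma f(x)=\frac{1}{\Gamma(\sigma)}\int_0^\infty e^{-r}\,e^{-\frac{t^2\mathcal{L}}{4r}}f(x)\,\frac{dr}{r^{1-\sigma}},$$
and observe that for each fixed $r>0$ the map $t\mapsto t^2/(4r)$ is a strictly increasing bijection of $(0,\infty)$ onto itself tending to $0$ as $t\to 0^+$. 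Since $\|\cdot\|_{E_\rho}$ is a seminorm that depends only on the ordered range of its argument (the supremum in its definition runs over all decreasing sequences tending to $0$), this change of variable leaves it unchanged, so
$$\big\|e^{-\frac{t^2\mathcal{L}}{4r}}f(x)\big\|_{E_\rho}=\mathcal{V}_\rho(e^{-t\mathcal{L}})(f)(x)\qquad\text{for every }r>0.$$

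Next I would apply $\|\cdot\|_{E_\rho}$ to the subordination identity and pull it inside the $r$‑integral by Minkowski's inequality for the seminorm $\|\cdot\|_{E_\rho}$, obtaining the pointwise domination
$$\mathcal{V}_\rho(\mathcal{P}_{t,\mathcal{L}}^\sigma)(f)(x)=\big\|\mathcal{P}_{t,\mathcal{L}}^\sigma f\big\|_{E_\rho}\le\frac{1}{\Gamma(\sigma)}\int_0^\infty e^{-r}\big\|e^{-\frac{t^2\mathcal{L}}{4r}}f(x)\big\|_{E_\rho}\frac{dr}{r^{1-\sigma}}=\mathcal{V}_\rho(e^{-t\mathcal{L}})(f)(x),$$
where the last equality uses the previous display together with $\int_0^\infty e^{-r}r^{\sigma-1}\,dr=\Gamma(\sigma)$. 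Taking $L^{p,\lambda}_{\alpha,V}(\RN)$‑norms on both sides and invoking Theorem \ref{thonm} then gives
$$\|\mathcal{V}_{\rho}(\mathcal{P}_{t,\mathcal{L}}^\sigma)(f)\|_{L^{p,\lambda}_{\alpha, V}(\RN)}\le\|\mathcal{V}_\rho(e^{-t\mathcal{L}})(f)\|_{L^{p,\lambda}_{\alpha,V}(\RN)}\le C\|f\|_{L^{p,\lambda}_{\alpha, V}(\RN)},\qquad 1<p<\infty.$$
(Alternatively, one may keep the $r$‑integral until after taking Morrey norms, as in the proof of Theorem \ref{general}, and then use Minkowski's integral inequality for the Morrey norm; the outcome is the same since the $r$‑integrand is in fact independent of $r$.)

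Everything above is essentially formal, so I do not expect a genuine obstacle here: the analytic work was already carried out in Theorem \ref{thonm}. The only points deserving an explicit word are (i) that $\|\cdot\|_{L^{p,\lambda}_{\alpha,V}}$ is a norm, so that Minkowski's integral inequality applies — this follows by applying the ordinary Minkowski inequality in $L^p(B(x_0,r))$ for each ball and then taking the supremum over balls, each weighted by the fixed factor $\big(1+r/\gamma(x_0)\big)^{\alpha/p}r^{-\lambda/p}$ — and (ii) the invariance of the variation seminorm under the reparametrization $t\mapsto t^2/(4r)$ recorded in the second display, which is the one place where a careful reader will want the argument spelled out.
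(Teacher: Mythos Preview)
Your proposal is correct and follows essentially the same route as the paper: the paper's proof is the single line ``same procedure as in the proof of Theorem \ref{general}'', i.e., apply the subordination formula, pull $\|\cdot\|_{E_\rho}$ inside the $r$-integral, and then invoke Theorem \ref{thonm} in place of Theorem \ref{thonp}. Your extra observation that the reparametrization $t\mapsto t^2/(4r)$ makes the $r$-integrand independent of $r$ (yielding a clean pointwise domination $\mathcal{V}_\rho(\mathcal{P}_{t,\mathcal{L}}^\sigma)(f)\le \mathcal{V}_\rho(e^{-t\mathcal{L}})(f)$) is a nice sharpening, but not a different method.
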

\begin{proof}
We can prove this theorem as the same procedure in the proof of Theorem \ref{general}.
\end{proof}

\bigskip

\end{document}